\tikzstyle{vertex}=[circle, draw, inner sep=0pt, minimum size=13pt]
\newcommand{\vertex}{\node[vertex]}
\newtheorem{thm}{Theorem}[section]
\newtheorem{cor}[thm]{Corollary}
\theoremstyle{definition}
\newtheorem{example}[thm]{Example}
\newcommand{\mf}{\mathfrak}
\newcommand{\wh}{\widehat}
\newcommand{\ul}{\underline}
\newcommand{\Cn}{C_{\leq n}}
\newcommand{\dd}{\hspace{.1cm}|\hspace{.1cm}}
\newcommand{\ind}{{\rm ind \hspace{.1cm}}}
\newcommand{\rk}{{\rm rk \hspace{.1cm}}}
\newcommand{\A}{{\rm A}}
\newcommand{\C}{{\rm C}}
\newcommand{\lf}{\left\lfloor}
\newcommand{\rf}{\right\rfloor}
\newcommand{\Z}{\mathbb Z}
\begin{document}

\title{\bf Symplectic meanders\footnote{The results of this paper were delivered by the second author in a talk entitled \textit{Symplectic Meanders} in a January 2015 conference at the University of Miami in honor of his adviser Michelle Wachs.  Some of these results, inclusive of the meander construction, have been independently obtained by D. Panyushev and O. Yakimova per a recent arXiv post on January 3, 2016 \textbf{\cite{Panyushev2}}.}}
\author{Vincent E. Coll, Jr., Matthew Hyatt, and Colton Magnant}

\maketitle

\noindent
\small
\textit{Department of Mathematics, Lehigh University, Bethlehem, PA, USA}\\
\textit{Mathematics Department, Pace University, Pleasantville, NY, USA}\\
\textit{Department of Mathematical Sciences, Georgia Southern University, Statesboro, GA, USA
}

\begin{abstract}
\noindent
Analogous to the $\mathfrak{sl}(n)$ case, we address the computation of the index of seaweed subalgebras of $\mathfrak{sp}(2n)$ by introducing graphical representations called symplectic meanders. Formulas for the algebra's index may be computed by counting the connected components of its associated meander.  In certain cases, formulas for the index can be given in terms of elementary functions.
\end{abstract}

\noindent
\textit{Mathematics Subject Classification 2010}:  17B08, 17B20

\noindent
\textit{Key Words and Phrases}: Frobenius Lie algebra, symplectic Lie algebra, seaweed, index, meander.

\section{Introduction}
The \textit{index} of a Lie algebra $\mf{g}$ is an important invariant and may be regarded as a generalization of the Lie algebra's rank:  $\ind \mf{g}\le \rk  \mf{g}$,  with equality when $\mf{g}$ is reductive.  More formally, the index of a Lie algebra $\mf{g}$ is given by

\[\ind \mf{g}=\min_{f\in \mf{g^*}} \dim  (\ker (B_f))\]

\noindent where $B_f$ is the associated skew-symmetric \textit{Kirillov form} defined by $B_f(x,y)=f([x,y])$ for all $x,y\in\mf{g}$.  Of particular interest are those Lie algebras for which the index is equal to zero.  Such algebras are called \textit{Frobenius} and have been studied extensively from the point of view of invariant theory \textbf{\cite{Ooms}} and are of special interest in deformation and quantum group theory stemming from their connection with the classcial Yang-Baxter equation (see \textbf{\cite{G1}} and \textbf{\cite{G2}}).

\textit{Seaweed algebras}, along with their very descriptive name, were first introduced by Dergachev and A. Kirillov in \textbf{\cite{Kirillov}}, where they defined such algebras as subalgebras of
$\mathfrak{gl}(n)$ preserving certain flags of subspaces developed from two compositions (ordered partitions) of $n$ (see Section 2.1). Subsequently, Panyushev \textbf{\cite{Panyushev1}} extended the (Lie theoretic) definition to reductive algebras: If $\mf{p}$ and $\mf{p'}$ are parabolic subalgebras of a reductive Lie algebra $\mf{g}$ such that $\mf{p}+\mf{p'}=\mf{g}$, then $\mf{p}\cap\mf{p'}$ is called a \textit{seaweed subalgebra o}f $\mf{g}$ or simply $seaweed$ when $\mathfrak{g}$ is understood.  Elsewhere, Joseph \textbf{\cite{Joseph}} has called seaweed algebras, \textit{biparabolic}.

To facilitate the computation of the index of seaweed subalgebras of $\mathfrak{sl}(n)$, the authors in \textbf{\cite{Kirillov}} also introduced the notion of a \textit{meander} --  a planar graph representation of the seaweed algebra.  The index of the seaweed can then be computed based on the number of connected components of the meander.  Thus producing, when the compositions have a small number of parts, explicit formulas for the index in terms of elementary functions whose arguments are the terms in the compositions of $n$. In particular, seaweeds are Frobenius when their associated meander graph consists of a single tree.  The latter result amplifies a now classical result of Elashvili \textbf{\cite{Elash}} which asserts that a maximal parabolic subalgebra of $\mathfrak{sl}(n)$, say $\mathfrak{p}((a,b)\dd (n))$, has index  $(a,b)-1$ and so is Frobenius precisely when $a$ and $b$ are relatively prime. In \textbf{\cite{Coll1}}, Coll et al.~produced additional explicit formulas, establishing, in particular, that the seaweed $\mathfrak{p}((a,b,c)\dd (n))$ is Frobenius when $(a+b,b+c)=1$; and in \textbf{\cite{Collar,Coll2}}, they introduced the notion of a meander's \textit{signature}, which renders Panyushev's well-known reduction into a deterministic sequence of graph theoretic moves.

The signature provides a fast algorithm (linear time in the number of vertices) for the computation of the index of a Lie algebra associated with the meander, allows for the speedy determination of the graph's plane \textit{homotopy type} (a finer invariant than the index), and can be used to construct arbitrarily large sets of meanders, Frobenius and otherwise, of any given size and configuration. More importantly, the signature can be used to test any relatively prime conditions which might serve to identify a Frobenius seaweed subalgebra of $\mathfrak{sl}(n)$. Indeed, using signature moves and complexity arguments, Karnauhova and Liebscher \textbf{\cite{Kar}} show, in particular, that there is no linear gcd formula for finding the index of the general seaweed $\mathfrak{p}((a_1,\dots, a_k)\dd(n))$, where $k\geq 4$.  This establishes that the formulas in \textbf{\cite{Coll2}} are, in some sense, the only ``nice'' ones.

Here, we advance this entire line of inquiry by examining seaweed subalgebras of $\mathfrak{sp}(2n)$ and after the fashion of the graphical approach detailed above introduce the notion of a \textit{symplectic meander}, which may be associated to a seaweed subalgebra of $\mathfrak{sp}(2n)$.  We establish relatively prime conditions for a symplectic seaweed to be Frobenius and find that the associated meander must reduce to a certain type of forest. As before, we provide relatively prime conditions for the seaweed to be Frobenius in all reasonable cases.

We assume throughout that the group field is algebraically closed and of characteristic zero, although much of what we do remains true in finite characteristic.  We also take the index, homotopy type etc.~of a meander to mean the index, homotopy type etc.~of its associated seaweed.

\section{Type A - $\mathfrak{sl}(n)$}

\subsection{Seaweeds}
Let $\mf{p}$ and $\mf{p'}$ be two parabolic subalgebras of a simple Lie algebra $\mf{g}$.  If $\mf{p} + \mf{p'}= \mf{g}$ then
$\mf{p}\cap\mf{p'}$ is called a seaweed subalgebra of $\mf{g}$. We assume that $\mf{g}$ is equipped with a triangular decomposition

$$
\mf{g}=\mf{u_+}\oplus\mf{h}\oplus\mf{u_-}
$$

\noindent
where $\mf{h}$ is a Cartan subalgebra of $\mf{g}$ and $\mf{u_+}$ and $\mf{u_-}$ are the subalgebras consisting of the upper and lower triangular matrices, respectively. Let $\Pi$ be the set of $\mf{g}$'s simple roots and for $\alpha\in\Pi$,
let $\mf{g}_{\alpha}$ denote the root space corresponding to $\alpha$. A seaweed subalgebra $\mf{p}\cap\mf{p'}$ is called \textit{standard} if
$\mf{p}\supseteq \mf{h}\oplus\mf{u}_+$ and $\mf{p'}\supseteq \mf{h}\oplus\mf{u_-}$.
In the case that $\mf{p}\cap\mf{p'}$ is standard, let
$\Psi=\{\alpha\in\Pi :\mf{g}_{-\alpha}\notin \mf{p}\}$,
$\Psi'=\{\alpha\in\Pi :\mf{g}_{\alpha}\notin \mf{p'}\}$,
and denote the seaweed by $\mf{p}(\Psi \dd \Psi')$.

Let $\mf sl(n)$ be the algebra of $n\times n$ matrices with trace zero and consider the triangular decomposition of $\mf{sl}(n)$ as above.  Let $\Pi=\{\alpha_1,\dots ,\alpha_{n-1}\}$ be the set of simple roots of $\mf sl(n)$ with the standard ordering and let let $\mf{p}_n^\A(\Psi \dd \Psi')$ denote a seaweed subalgebra of $\mf sl(n)$ where $\Psi$ and $\Psi'$ are subsets of $\Pi$.

Let $C_n$ denote the set of strings of positive integers whose sum is $n$
(i.e., $C_n$ is the set of compositions of $n$).
It will be convenient to index seaweeds of $\mf{sl}(n)$ by pairs of elements
of $C_n$. Let $\mathcal{P}(X)$ denote the power set of a set $X$.
Let $\varphi_\A$ be the usual bijection from $C_n$ to a set of cardinality $n-1$.
That is, given $\ul{a}=(a_1,a_2,\dots ,a_m)\in C_n$, define
$\varphi_\A :C_n\rightarrow \mathcal{P}(\Pi)$ by
\[\varphi_\A(\ul{a})=\{\alpha_{a_1},\alpha_{a_1+a_2},\dots
,\alpha_{a_1+a_2+\dots +a_{m-1}}\}.\]

\noindent
Then define $$\mf{p}_n^\A(\ul{a} \dd \ul{b})
=\mf{p}_n^\A(\varphi_\A(\ul{a}) \dd \varphi_\A(\ul{b})).$$

\noindent

By construction, the sequence of numbers in $\ul{a}$ determine the heights of triangles
below the main diagonal in $\mf{p}_n^\A(\ul{a} \dd \ul{b})$ which may have nonzero entries,
and the sequence of numbers in $\ul{b}$ determine the heights of triangles
above the main diagonal.
For example, the seaweed $\mf{p}_7^\A((4,3) \dd (2,2,1,2))
=\mf{p}_7^\A(\{\alpha_4\} \dd \{\alpha_2,\alpha_4,\alpha_5\})$
has the following shape, where * indicates the possible nonzero entries. See
the left side of Figure \ref{Aseaweed} below.
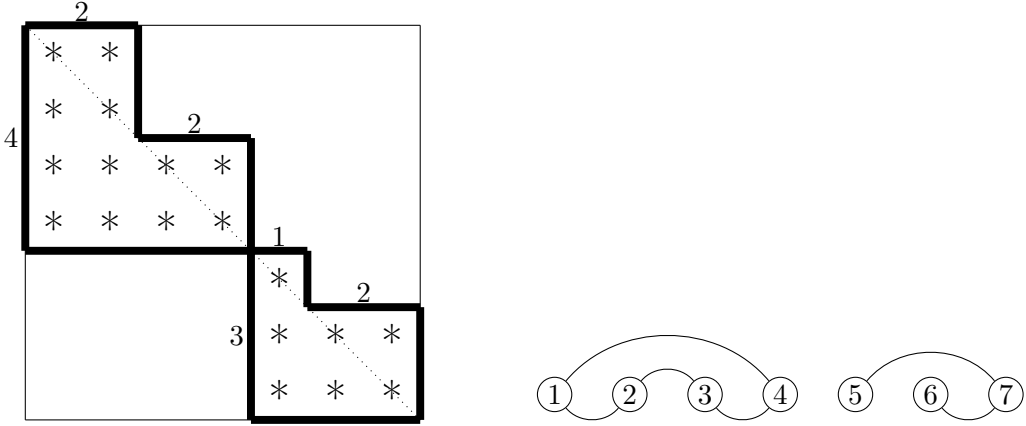
\begin{figure}[H]
\[\begin{tikzpicture}[scale=0.75]
\draw (0,0) -- (0,7);
\draw (0,7) -- (7,7);
\draw (7,7) -- (7,0);
\draw (7,0) -- (0,0);
\draw [line width=3](0,7) -- (0,3);
\draw [line width=3](0,3) -- (4,3);
\draw [line width=3](4,3) -- (4,0);
\draw [line width=3](4,0) -- (7,0);

\draw [line width=3](0,7) -- (2,7);
\draw [line width=3](2,7) -- (2,5);
\draw [line width=3](2,5) -- (4,5);
\draw [line width=3](4,5) -- (4,3);
\draw [line width=3](4,3) -- (5,3);
\draw [line width=3](5,3) -- (5,2);
\draw [line width=3](5,2) -- (7,2);
\draw [line width=3](7,2) -- (7,0);

\draw [dotted] (0,7) -- (7,0);

\node at (.5,6.4) {{\LARGE *}};
\node at (1.5,6.4) {{\LARGE *}};
\node at (.5,5.4) {{\LARGE *}};
\node at (1.5,5.4) {{\LARGE *}};
\node at (.5,4.4) {{\LARGE *}};
\node at (1.5,4.4) {{\LARGE *}};
\node at (2.5,4.4) {{\LARGE *}};
\node at (3.5,4.4) {{\LARGE *}};
\node at (.5,3.4) {{\LARGE *}};
\node at (1.5,3.4) {{\LARGE *}};
\node at (2.5,3.4) {{\LARGE *}};
\node at (3.5,3.4) {{\LARGE *}};
\node at (4.5,2.4) {{\LARGE *}};
\node at (4.5,1.4) {{\LARGE *}};
\node at (5.5,1.4) {{\LARGE *}};
\node at (6.5,1.4) {{\LARGE *}};
\node at (4.5,0.4) {{\LARGE *}};
\node at (5.5,0.4) {{\LARGE *}};
\node at (6.5,0.4) {{\LARGE *}};

\node at (-.25,5) {4};
\node at (3.75,1.5) {3};
\node at (1,7.25) {2};
\node at (3,5.25) {2};
\node at (4.5,3.25) {1};
\node at (6,2.25) {2};

\end{tikzpicture}
\hspace{1.5cm}
\begin{tikzpicture}[scale=1]
\vertex (1) at (1,0) {1};
\vertex (2) at (2,0) {2};
\vertex (3) at (3,0) {3};
\vertex (4) at (4,0) {4};
\vertex (5) at (5,0) {5};
\vertex (6) at (6,0) {6};
\vertex (7) at (7,0) {7};

\path
(1) edge[bend left=50] (4)
(2) edge[bend left=50] (3)
(5) edge[bend left=50] (7)
(1) edge[bend right=50] (2)
(3) edge[bend right=50] (4)
(6) edge[bend right=50] (7)
;\end{tikzpicture}
\]
\caption{
$\mf{p}_7^\A((4,3) \dd (2,2,1,2))$ and its associated
meander}
\label{Aseaweed}
\end{figure}


\subsection{Meanders}

Given a seaweed $\mf{p}^\A(\ul{a} \dd \ul{b})$ in $\mf{sl}(n)$,  Dergachev and A. Kirillov \textbf{\cite{Kirillov}} showed
how to associate to each such seaweed a planar graph (we use the word graph to mean a loopless, 2-edge-colored, multigraph)
called a \textit{meander}, which we denote $M^\A(\ul{a} \dd \ul{b})$.
We label the vertices of $M^\A(\ul{a} \dd \ul{b})$ as $1, 2, \dots ,n$ from left to right along a
horizontal line. We place edges above the horizontal line, called top edges, according to $\ul{a}$ as follows.
Let $\ul{a}=(a_1,a_2,\dots , a_m)$, and let $V_i$ be the $i^{\text{th}}$ block of vertices, that is the subset
of vertices whose label is greater than $a_1+a_2+\dots +a_{i-1}$ and less than $a_1+a_2+\dots +a_i+1$.
For each block $V_i$, place top edges connecting vertex $j$ to vertex $k$ if
$j+k=2(a_1+a_2+\dots+a_{i-1})+a_i+1$. In the same way,
place bottom edges according to $\ul{b}$. See the right side of Figure 1.

Note that $a_i$ is odd if and only if the vertex at the center of $V_i$ is not incident with a top edge, and
a similar statement follows for the bottom. Each vertex is incident with at most one top edge, and at most one
bottom edge. Thus we define a top bijection $t$ on $[n]$ by $t(j)=k$ if there is a top edge from vertex $j$ to
vertex $k$, and $t(j)=j$ if vertex $j$ is not incident with a top edge. Similarly we define a bottom bijection $b$ on $[n]$
by $b(j)=k$ if there is a bottom edge from vertex $j$ to vertex $k$,
and $b(j)=j$ if vertex $j$ is not incident with a bottom edge. Given a meander $M^\A(\ul{a} \dd \ul{b})$,
let $\sigma_{\ul{a},\ul{b}}$ be its associated permutation defined by $\sigma_{\ul{a},\ul{b}}(j)=t(b(j))$.
For example if $\ul{a}=(4,3)$ and $\ul{b}=(2,2,1,2)$ then the associated permutation written as a product of
disjoint cycles is $\sigma_{\ul{a},\ul{b}}=(1,3)(2,4)(5,7,6)$.

The following result of Dergachev and A. Kirillov allows us to compute the index of a seaweed by counting the number of connected components of its associated meander.
(Note that the formulas below differ from those appearing in \textbf{\cite{Kirillov}} by
one, since we are working in $\mf{sl}(n)$ instead of $\mf{gl}(n)$.)

\begin{thm}[\textbf{\cite{Kirillov}}, Theorem 5.1]\label{DKformula}
\
\begin{enumerate}[(i)]
\item The index of a seaweed $\mf{p}^\A(\ul{a} \dd \ul{b})$ is equal to the number of connected
components plus the number of cycles in the graph $M^\A(\ul{a} \dd \ul{b})$, minus one.
\
\item The index of a seaweed $\mf{p}^\A(\ul{a} \dd \ul{b})$ is equal to the number of cycles in the
disjoint cycle decomposition of $\sigma_{\ul{a},\ul{b}}$, minus one.
\end{enumerate}

\end{thm}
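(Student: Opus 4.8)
The plan is to prove this as an application of the general index formula for seaweeds to the specific combinatorial structure of the meander. The two parts (i) and (ii) are really one statement phrased two ways, since the connected components and cycles of the graph $M^\A(\ul{a} \dd \ul{b})$ are in bijection with the cycles of $\sigma_{\ul{a},\ul{b}}$ — a path component corresponds to a cycle of $\sigma_{\ul{a},\ul{b}}$ with a ``fixed point'' behavior at its endpoints, and a cyclic component corresponds to a cycle where the top and bottom bijections wind around. So I would first prove the equivalence of (i) and (ii) as a purely graph-theoretic lemma: given a graph whose edge set is the disjoint union of a top matching $t$ and a bottom matching $b$ on $[n]$, each connected component is either a path or an even cycle; a path contributes exactly one cycle to $\sigma = t\circ b$ while an even cycle contributes exactly two (one for each ``direction''). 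Summing, the number of cycles of $\sigma$ equals (number of components) $+$ (number of cyclic components), which is (i) $\Leftrightarrow$ (ii) after subtracting one from each side.

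The substantive content is then (ii), or equivalently the statement that $\ind \mf{p}^\A(\ul{a}\dd\ul{b})$ equals the corank of a certain skew-symmetric matrix built from $\sigma_{\ul{a},\ul{b}}$, and that this corank is (number of cycles of $\sigma$) $-1$. Here I would invoke the Dergachev–Kirillov machinery directly, since the theorem is attributed to them. Concretely: for a generic functional $f\in\mf{g}^*$, represented by a generic element of the dual via the trace form, the Kirillov form $B_f$ on the seaweed $\mf{p}^\A(\ul{a}\dd\ul{b})$ can be made block-triangular/combinatorial by choosing $f$ supported on the ``corner'' entries dictated by the two flags. The kernel of $B_f$ is then computed by a weighted path/cycle argument on the meander: following alternating top and bottom edges multiplies together the generic scalars, and the kernel dimension picks up exactly one degree of freedom per connected component except that the overall scaling relation (trace-zero condition in $\mf{sl}$ versus $\mf{gl}$) removes one, while each genuine cycle in the graph contributes an extra kernel direction because the product of scalars around the cycle imposes no new constraint generically — hence the ``plus number of cycles, minus one.''

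The main obstacle is the linear-algebra heart of the Dergachev–Kirillov argument: showing that for a \emph{generic} $f$ the rank of $B_f$ is exactly $n - (\text{components}) - (\text{cycles}) + 1$, i.e., both that the rank is no larger (exhibit enough independent rows/columns, organized along the meander's paths) and no smaller (exhibit a kernel of the claimed dimension, one vector per component plus one per cycle minus the global trace relation). I would organize this by choosing coordinates so that the support of a generic $f$ meets the Lie bracket structure only through the meander's edges, reducing $B_f$ to a direct sum over connected components of small explicit matrices — a path on $2\ell$ vertices giving a nonsingular $2\ell\times 2\ell$ block after the trace normalization, and a cycle on $2\ell$ vertices giving a block of corank $2$ generically (corank $1$ from being a component, plus $1$ from the cyclic monodromy). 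Since the problem attributes the result to \textbf{\cite{Kirillov}}, Theorem 5.1, I would present the argument at the level of this reduction and the per-component computation, citing the original for the verification that the chosen $f$ is indeed generic (equivalently, that the index, being a minimum over all $f$, is actually attained at such an $f$ — a semicontinuity/Zariski-density point).

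Finally, I would close the loop by noting that the $\mf{gl}(n)$ formula of Dergachev–Kirillov reads ``components $+$ cycles'' without the $-1$, and passing to $\mf{sl}(n)$ subtracts exactly one from the index because $\mf{gl}(n) = \mf{sl}(n)\oplus \mathbb{C}\,\mathrm{Id}$ with the center contributing a one-dimensional kernel to every $B_f$; this accounts for the parenthetical remark in the statement and requires only the elementary observation that the Kirillov form of the abelian summand is identically zero.
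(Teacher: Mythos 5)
This statement is not proved in the paper at all: it is imported verbatim from Dergachev--Kirillov (Theorem 5.1 of \textbf{\cite{Kirillov}}), with only the remark that the formulas are shifted by one because the present paper works in $\mf{sl}(n)$ rather than $\mf{gl}(n)$. So there is no internal proof to compare against, and the relevant question is whether your sketch is a faithful and sound reconstruction of the cited argument. Largely it is. Your purely combinatorial lemma is correct and easy to verify: every component of $M^\A(\ul{a}\dd\ul{b})$ is a path or an even (alternating) cycle, a path on $m$ vertices yields a single $m$-cycle of $\sigma_{\ul{a},\ul{b}}=t\circ b$, and a $2\ell$-cycle splits into two $\ell$-cycles, which gives exactly (i)$\Leftrightarrow$(ii); the running example $\ul{a}=(4,3)$, $\ul{b}=(2,2,1,2)$ with $\sigma_{\ul{a},\ul{b}}=(1,3)(2,4)(5,7,6)$ confirms the count. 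The passage from $\mf{gl}(n)$ to $\mf{sl}(n)$ via $\mf{p}^{\mf{gl}}=\mf{p}^{\mf{sl}}\oplus \mathbb{C}\,\mathrm{Id}$, additivity of the index over direct sums, and the index-one abelian summand is also correct and accounts for the ``minus one.'' Two caveats. First, your parentheticals for the rank computation are swapped: exhibiting a kernel of the claimed dimension for the chosen $f$ shows the rank is no \emph{larger} (hence $\ind\le$ claimed value), while exhibiting independent rows along the meander shows it is no \emph{smaller}; more importantly, pinning down the index itself requires knowing that this particular $f$ attains the \emph{minimal} kernel dimension over all of $\mf{g}^*$, which is exactly the genericity/maximal-rank point you defer to \textbf{\cite{Kirillov}}. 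Deferring that core linear-algebra argument is acceptable here precisely because the paper itself does no more than cite it, but be aware that it is the substantive content of the theorem, not a routine semicontinuity remark.
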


\begin{example}
The index of $\mf{p}^\A((4,3) \dd (2,2,1,2))$ is two.
\end{example}

We also have the following necessary condition for a Type A meander to be Frobenius.  This follows immediately from Theorem {\ref {DKformula} (cf., Corollary 4.7, \textbf{\cite{Panyushev1}}).

\begin{cor}\label{2 odd parts}

If $\ind \mf{p}^\A(\ul{a} \dd \ul{b})$ is Frobenius, then there are exactly 2 odd integers among $\ul{a}$ and $\ul{b}$.

\end{cor}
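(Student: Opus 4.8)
The plan is to deduce this from Theorem \ref{DKformula} by a parity argument. First, recall that a vertex $j$ of $M^\A(\ul a \dd \ul b)$ is a fixed point of the top bijection $t$ exactly when it is the center of a block $V_i$ with $a_i$ odd, and similarly for the bottom bijection $b$; every other vertex has degree one on top and degree one on bottom. So the number of top fixed points equals the number of odd parts in $\ul a$, the number of bottom fixed points equals the number of odd parts in $\ul b$, and these two sets are disjoint unless a vertex is simultaneously the center of an odd top block and an odd bottom block. The first step is therefore to classify the vertices by their degree in the meander $M^\A(\ul a \dd \ul b)$ (as a $2$-edge-colored graph, counting a top edge and a bottom edge separately): a vertex has degree $2$ unless it is a top fixed point or a bottom fixed point, in which case it has degree $1$ (or degree $0$, if it is both).

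Next I would translate ``Frobenius'' into a statement about components. By Theorem \ref{DKformula}(i), $\mf p^\A(\ul a \dd \ul b)$ is Frobenius (index zero) precisely when the number of connected components plus the number of cycles equals one, i.e.\ $M^\A(\ul a \dd \ul b)$ is connected and contains no cycle --- it is a single tree (path, in fact, since maximum degree is $\le 2$). In particular it is a path, and a path on $n$ vertices has exactly two vertices of degree $1$ and $n-2$ of degree $2$; it has no isolated vertices when $n\ge 2$ (and the case $n=1$ is degenerate, with $\ul a=\ul b=(1)$, giving two odd parts anyway). Combining with the degree classification from the first step: the two endpoints of the path are exactly the fixed points of $t$ and $b$, so the total number of such fixed points is $2$, and no vertex can be both (that would force an isolated vertex). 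Hence the number of odd parts among $\ul a$ together with the number of odd parts among $\ul b$ is exactly $2$.

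The only mild subtlety --- and the step I would be most careful about --- is the bookkeeping when a single vertex is simultaneously a top fixed point and a bottom fixed point, i.e.\ when it is the center of an odd block on both sides; such a vertex is isolated in the meander, which is incompatible with the meander being a single path on $n\ge 2$ vertices, so this case is excluded, and one checks separately the trivial $n=1$ case. One should also note (though it is automatic from $\sum a_i = \sum b_i = n$) that the number of odd parts in $\ul a$ and the number in $\ul b$ have the same parity as $n$, so their sum is even; the content of the corollary is the sharper claim that the sum equals exactly $2$, not merely that it is even. Alternatively, the same conclusion follows from Theorem \ref{DKformula}(ii): $\sigma_{\ul a,\ul b}=t\circ b$ has exactly one cycle, and a short computation with the supports of $t$ and $b$ shows that the fixed points of $\sigma_{\ul a,\ul b}$ coincide with the common fixed points of $t$ and $b$, forcing the count of non-shared fixed points to be $2$; I would present the component version as the cleaner route.
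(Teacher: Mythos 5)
Your argument is correct and is exactly the deduction the paper has in mind: the paper offers no separate proof, stating only that the corollary follows immediately from Theorem \ref{DKformula}, and your reasoning (Frobenius forces one component and no cycles, hence a single path whose two degree-one endpoints are precisely the centers of odd blocks) is the standard way to make that immediacy explicit, including the correct handling of the $n=1$ and isolated-vertex cases.
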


\section{Signature}

The \textit{signature} of a meander is a sequence of graph-theoretic ``moves'' which  deterministically winds a meander down to a unique representative form called the meander's (plane) \textit{homotopy type} consisting of nested circles (cycles) and points (vertices). From there, Theorem 2.1 can be readily applied to compute the index. For example, the homotopy type of the meander in Figure 1 is a single circle and and point exterior to the circle. Note that the homotopy type of a meander is a finer invariant than the index.

There are five basic moves in the signature, four of which replace a given meander with a homotopically equivalent one. The remaining move (component elimination) changes the index of the meander by eliminating a set of connected components. See \textbf{\cite{Coll2}} for details and examples.

\begin{thm}[\textbf{\cite{Coll2}}, Lemma 4]\label{edge contraction}

Consider the meander $M^\A(\ul{a} \dd \ul{b})$ where
$\ul{a}=(a_1,a_2,\dots ,a_m)$ and $\ul{b}=(b_1,b_2,\dots ,b_t)$. A component elimination move (C) removes cycles (and possibly a single vertex)
among the vertices $1,2,\dots ,a_1$. A pure contraction move (P) contracts the bottom edges connecting the vertices $1,2,\dots ,a_1$, and deletes these vertices. A block elimination move (B) and a rotation contraction move (R) contract the bottom edges of the
vertices $1,2,\dots ,b_1-a_1$, and deletes these vertices. And a flip move (F) simply exchanges $\ul{a}$ for $\ul{b}$.  In particular, only the component elimination move changes the homotopy type of the meander.

\end{thm}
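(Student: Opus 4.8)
The plan is to verify the effect of each of the five moves directly against the explicit combinatorial description of $M^\A(\ul a\dd\ul b)$ and then transfer the conclusion to the index and the plane homotopy type via Theorem \ref{DKformula}. First I would record the two bijections explicitly. Writing $s_i=a_1+\cdots+a_i$ with $s_0=0$, the top bijection is $t(j)=s_{i-1}+s_i+1-j$ for $s_{i-1}<j\le s_i$ — that is, $t$ reflects each top block about its midpoint and fixes that midpoint precisely when the block has odd length — and the identical formula built from $\ul b$ gives the bottom bijection. The flip move $F$ is then immediate: interchanging $\ul a$ and $\ul b$ reflects the picture across the horizontal axis, so it merely transposes the two edge colours and changes neither the underlying graph, nor the cycle structure of $\sigma_{\ul a,\ul b}$, nor the nested system of circles and points that constitutes the homotopy type. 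The remaining four moves are read off by comparing the first parts $a_1$ and $b_1$, together with a parity condition on the relevant block midpoints: one configuration triggers the component elimination $C$, and the strict inequality configurations split into $P$, $B$, and $R$.

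Next I would carry out each excision. For $C$: since $\{1,\dots,a_1\}$ is exactly the first top block, $t$ restricts to an involution on it; in the configuration where $C$ fires, $b$ also restricts to an involution there, so the induced subgraph is a disjoint union of cycles alternating between the two edge colours together with one isolated vertex when $a_1$ is odd. These are whole connected components of $M^\A(\ul a\dd\ul b)$, so deleting them leaves $M^\A((a_2,\dots,a_m)\dd(b_2,\dots,b_t))$ on the relabelled vertices and, by Theorem \ref{DKformula}(i), lowers the index by exactly the number of components removed; this is the one move that alters the homotopy type, by stripping off some of its nested circles (and possibly one point).

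For $P$, $B$, and $R$ the move deletes a prefix $\{1,\dots,r\}$ of the vertices after contracting the bottom edges met by those vertices. The key point is that, once the rest of the prefix is removed, each deleted vertex $j$ is joined by a bottom edge to a vertex $b(j)$ that has itself lost its bottom edge, so contracting $\{j,b(j)\}$ and smoothing installs a single new top edge between the top-neighbours of $j$ and of $b(j)$; I would compute these new top edges, verify that after relabelling they reassemble into blocks on which the new top bijection again acts by reflection about a midpoint, and thereby identify the reduced graph with $M^\A(\ul a'\dd\ul b')$ for the explicit shortened compositions $\ul a'$, $\ul b'$ (so that $\ul b$ shortens or loses its first part and the first top block of $\ul a$ shrinks accordingly). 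Since deleting a prefix and contracting a nested family of arcs are plane operations that introduce no crossings, the homotopy type — hence the index, by Theorem \ref{DKformula} — is preserved by $P$, $B$, and $R$.

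I expect the main obstacle to be the bookkeeping in the $P/B/R$ cases: one must pin down exactly where each smoothed top edge lands after relabelling and confirm that the resulting arc system is genuinely nested rather than crossing, and this forces a split into parity sub-cases according to the parities of the parts involved — these sub-cases being precisely what necessitates two moves $B$ and $R$ that have the same effect on the compositions but differ in the planar surgery (the ``rotation'' of $R$) needed to keep the picture embeddable when a block midpoint is a fixed point. A secondary subtlety is to track the isolated vertices created by odd block midpoints through the contractions, since they change the component count in Theorem \ref{DKformula}(i) without changing the cycle count of $\sigma_{\ul a,\ul b}$ and must be reconciled with the ``possibly a single vertex'' clause of $C$.
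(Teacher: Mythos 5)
The paper itself contains no proof of this statement: it is imported verbatim from \cite{Coll2} (Lemma~4), and the text merely refers the reader there for details, so your argument has to stand on its own. Your overall plan --- write the top and bottom bijections explicitly as block reflections $t(j)=s_{i-1}+s_i+1-j$, split into cases according to how $a_1$ compares with $b_1$, and check each move directly against this edge rule --- is the natural route, and the flip and component-elimination steps are essentially right: when $a_1=b_1$ both reflections preserve $\{1,\dots,a_1\}$, so that set splits off as color-alternating cycles plus possibly the odd midpoint. Two corrections even here: the trigger for (C) is exactly $a_1=b_1$, with no ``parity condition on block midpoints'' anywhere in the case split; and by Theorem~\ref{DKformula}(i) each removed cycle lowers the index by $2$ (one component plus one cycle) and an isolated vertex by $1$, not ``by exactly the number of components removed.''

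The genuine gap is in the contraction moves, which carry the entire content of the lemma. What must be shown is that after contracting the prescribed bottom edges and deleting the prefix, the resulting graph is again a meander $M^\A(\ul{a}'\dd\ul{b}')$ for the specific shortened compositions of Panyushev's reduction, and your sketch both defers this computation and misstates its mechanism. For the pure contraction, every deleted vertex $j\le a_1$ has its top partner $t(j)=a_1+1-j$ deleted as well, so the surviving trace of that top edge is the edge joining the two bottom partners $b(j)=b_1+1-j$ and $b(t(j))=b_1-a_1+j$; these new edges satisfy $k+k'=2b_1-a_1+1$, i.e.\ after relabelling they are precisely the bottom edges of the part $a_1$ inserted into the new bottom composition $(b_1-2a_1,a_1,b_2,\dots,b_t)$. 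They are not ``new top edges between the top-neighbours of $j$ and of $b(j)$,'' and in the (B)/(R) range some deleted vertices have surviving top partners while others do not, so the edge-chasing is more delicate than your description allows. Relatedly, (B) and (R) are not distinguished by a parity-driven ``planar surgery'': they are the boundary and strict cases of the same size comparison (whether the leading block is consumed entirely or only in part), which is what produces different shortened compositions. Until the displaced edges are computed and matched block-by-block to the new compositions --- and until one checks that these contractions neither merge components nor create or destroy cycles, which is what ``the homotopy type is preserved'' actually means --- the lemma has not been proved.
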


It follows from Theorem 2.1 that a seaweed is Frobenius precisely when its associated meander is homotopically trivial.  Note also that the ``Winding Down" moves of the signature can be reversed to create ``Winding Up" moves which can be used to build all meanders, Frobenius and otherwise.

\subsection{Formulas}
While Theorem 2.1 provides an elegant formalism for computing the index of a seaweed, significant computational complexity persists.  What is needed is a mechanism for determining the index of a seaweed directly from its defining compositions. The first result of this kind is due to Elashvili.

\begin{thm}[\textbf{\cite{Elash}}, 1990] The maximal parabolic $\mf{p}^\A((a,b) \dd (n))$ has index $\rm{gcd}(a,b)-1.$
\end{thm}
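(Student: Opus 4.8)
The plan is to show that the meander $M^\A((a,b) \dd (n))$ has exactly $\gcd(a,b)$ connected components, each of which is a path (hence a tree, hence contributes no cycles), and then invoke Theorem~\ref{DKformula}(i), which gives $\ind \mf{p}^\A((a,b) \dd (n)) = (\text{components}) + (\text{cycles}) - 1 = \gcd(a,b) + 0 - 1$.

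First I would describe the meander explicitly. With $\ul{b} = (n)$, the single bottom block forces the bottom bijection $b$ to be the involution $j \mapsto n+1-j$, so the bottom edges are the ``rainbow'' arcs pairing $j$ with $n+1-j$. With $\ul{a} = (a,b)$ (where $a+b=n$), the top bijection $t$ is the involution fixing the two blocks $\{1,\dots,a\}$ and $\{a+1,\dots,n\}$ setwise: $t(j) = a+1-j$ for $j \le a$ and $t(j) = n+a+1-j$ for $j > a$. The next step is to analyze the orbits of the composite permutation $\sigma = t \circ b$ on $[n]$; since $t$ and $b$ are each involutions, the connected components of the meander correspond bijectively to the orbits of the dihedral group $\langle t, b\rangle$, equivalently to the orbits of $\sigma$ (together with the two fixed points of $t$ and one possible fixed point of $b$ folded in appropriately).

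The key computation is to track what $\sigma = t\circ b$ does. Working on residues, one checks that $b$ sends $j$ to $n+1-j$ and then $t$ adds or subtracts according to the block, so that $\sigma$ acts essentially as translation by $\pm a$ modulo $n$ on a suitable relabeling of the vertices — this is the standard ``meander-as-rotation'' phenomenon, and it is why $\gcd$ appears. Concretely I would set up a bijection between $[n]$ and $\Z/n\Z$ (or a doubled version $\Z/2n\Z$ to handle the endpoints cleanly, as is done for the circular/Type~C picture) under which both involutions become reflections of a regular $2n$-gon whose ``rotation part'' $\sigma$ is rotation by $2a$; the number of orbits is then $\gcd(2a, 2n)/2 = \gcd(a, n) = \gcd(a,b)$, using $n = a+b$. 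Finally I would verify that no component contains a cycle: each vertex has degree at most $2$ (one top edge, one bottom edge), and a component is a cycle only if every vertex in it has degree exactly $2$; but the fixed points of $t$ (the centers of the two top blocks when $a$, resp.\ $b$, is odd) and of $b$ (the center $\tfrac{n+1}{2}$ when $n$ is odd) are vertices of degree~$1$, and a short parity argument shows every component meets at least one such low-degree vertex — so every component is a path, contributing $0$ cycles.

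The main obstacle is the bookkeeping in the second step: getting the index shifts right at the boundaries between the two top blocks and at the center of the bottom rainbow, and confirming that the $\gcd$-count is exactly right rather than off by one in the sporadic parity cases (e.g.\ both $a$ and $b$ odd, versus one even). I would handle this by passing to the doubled $2n$-vertex model where all three potential fixed points disappear and everything becomes a clean free action of a cyclic group of rotations, compute $\gcd(2a,2n) = 2\gcd(a,b)$ orbits there, and then quotient back by the obvious two-to-one folding to recover $\gcd(a,b)$ components downstairs. An alternative, and perhaps cleaner, route is a direct induction using the signature moves of Theorem~\ref{edge contraction}: a flip and a block/pure contraction reduces $M^\A((a,b)\dd(n))$ to $M^\A((b \bmod a, \dots)\dd(a))$-type data, literally running the Euclidean algorithm on $(a,b)$, so that the component count is manifestly governed by $\gcd(a,b)$; the base case is a single block, whose meander is a disjoint union of the appropriate number of paths.
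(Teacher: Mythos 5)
The paper itself offers no proof of this statement (it is quoted from Elashvili), so what matters is the internal soundness of your argument, and it has a genuine flaw. Your plan hinges on the structural claim that $M^\A((a,b)\dd (n))$ consists of exactly $\gcd(a,b)$ components, each a path, justified by the assertion that every component must reach a degree-one vertex (a fixed point of $t$ or of $b$). This fails whenever $a$ and $b$ are both even: then $n$ is even too, $t$ and $b$ have no fixed points, every vertex has degree exactly $2$, and \emph{every} component is a cycle. For instance $(a,b)=(2,2)$ gives the single $4$-cycle on vertices $1,2,3,4$ (one component, one cycle), not two paths, and $(4,4)$ gives two $4$-cycles, not four paths; the promised ``short parity argument'' cannot exist. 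The final number survives only by accident: a cycle component contributes $2$ in Theorem \ref{DKformula}(i), exactly compensating for the two paths you expected, because the correct invariant is $\text{components}+\text{cycles}=\gcd(a,b)$, not ``$\gcd(a,b)$ components and no cycles.'' The same conflation occurs when you say the components (orbits of the dihedral group $\langle t,b\rangle$) are ``equivalently'' the orbits of $\sigma=t\circ b$: a cycle component splits into two $\sigma$-cycles, which is precisely the discrepancy your argument ignores. Your fallback base case is wrong for the same reason: $M^\A((a)\dd (a))$ is a union of $\lfloor a/2\rfloor$ two-edge cycles (plus possibly an isolated vertex), not a disjoint union of paths.

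The repair is short and stays within your setup: drop part (i) and use Theorem \ref{DKformula}(ii). With $\ul{b}=(n)$ one has $b(j)=n+1-j$, and $b(j)$ lies in the first top block exactly when $j\geq b+1$, so $t(b(j))=j+a-n$ in that case and $t(b(j))=j+a$ when $j\leq b$; hence $\sigma_{\ul{a},\ul{b}}$ is translation by $a$ modulo $n$ on the labels $1,\dots,n$. Its disjoint cycle decomposition therefore has exactly $\gcd(a,n)=\gcd(a,b)$ cycles, and the index is $\gcd(a,b)-1$, with no parity cases and no claim about the shape of the meander's components. (Alternatively, your Euclidean-algorithm induction via Theorem \ref{edge contraction} can be made to work, but the statement being inducted must be ``components plus cycles equals $\gcd(a,b)$,'' with the cycle-rich base case above.)
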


\noindent
In (\textbf{\cite{Coll1}}, 2011), Coll et al. established that
$\mf{p}^\A((a,b,c) \dd (n))$ is Frobenius precisely when $\gcd(a+b,b+c)=1$ but what the index is in the non-Frobenius was left open.  The obvious guess for the index, $\gcd(a+b,b+c)-1$, is correct but the ad hoc techniques in that paper are insufficient to prove it.  Of serious note is that the theory was missing a transparent algorithmic method of generating more examples of Frobenius meanders - so that new formulas for seaweeds with more complicated compositions could be uncovered and tested.  This led to the signature which we observe is, in essence, a graph theoretic rendering of Panyushev's reduction. In \textbf{\cite{Coll2}}, and using the signature, Coll et al. established the following extension of Elashvili's theorem.

\begin{thm}[\textbf{\cite{Coll2}}, 2015]\label{A 4 parts}  The seaweeds $\mf{p}^\A((a,b,c) \dd (n))$
and $\mf{p}^\A((a,b) \dd (c,d))$
both have index $\gcd(a+b,b+c) -1$.
\end{thm}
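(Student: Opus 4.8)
The plan is to derive the formula for both families from the signature moves collected in Theorem~\ref{edge contraction}, reducing each seaweed step by step until its meander becomes a union of homotopically trivial pieces from which Theorem~\ref{DKformula} reads off the index directly. Since Elashvili's theorem (the maximal parabolic case) is already available, the real content is handling the three-part compositions; the two-part/two-part case $\mf{p}^\A((a,b)\dd(c,d))$ should fall out of the same reduction, or can be obtained from the three-part case by a flip move (F) together with the observation that $(a,b)\dd(c,d)$ and $(a,b)\dd(n)$-type meanders are linked once one part is absorbed.

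First I would set up $\ul{a}=(a,b,c)$ and $\ul{b}=(n)$ with $n=a+b+c$, and track the pair of compositions through the moves. The key is Panyushev-style reduction: repeatedly apply whichever of the moves (P), (B), (R) is licensed by the current relative sizes of the leading parts, always peeling off a block of vertices and contracting edges, so that the sum $a+b+c$ strictly decreases while the quantity $\gcd(a+b,b+c)$ is preserved as an invariant of the reduction. I would prove this invariance by a direct case analysis: when $c\le b$, a (B)- or (R)-type move replaces the triple by a smaller one in which the relevant pairwise sums are shifted but have the same gcd (this is the same arithmetic that makes the Euclidean algorithm work on the pair $(a+b,\,b+c)$); when $c>b$, the symmetric situation obtains after possibly a flip. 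Component elimination (C) is only invoked at the very end, when the meander has collapsed to a single block, at which point one either has a single path (contributing nothing) or a single cycle (contributing one to the count), and the terminal value of $\gcd(a+b,b+c)$ tells us which and how many such components accumulated along the way.

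The main obstacle I anticipate is bookkeeping the case analysis cleanly: there are several regimes depending on how $a$, $b$, $c$ compare to one another (and to $b-a$, $c-b$, etc.), and in each regime one must check both that the correct signature move applies and that the gcd invariant is genuinely preserved rather than merely unchanged in absolute value by luck. A secondary subtlety is the parity condition from Corollary~\ref{2 odd parts}: the reduction must be consistent with the fact that Frobenius-ness forces exactly two odd parts among $a,b,c,n$, and one should verify that $\gcd(a+b,b+c)=1$ exactly captures this, which serves as a useful sanity check at each stage. Once the invariance lemma is established, the two theorems follow by induction on $a+b+c$, with Elashvili's theorem as the base case for the three-part family and a short separate (or parallel) reduction handling $\mf{p}^\A((a,b)\dd(c,d))$; I would present the $(a,b,c)\dd(n)$ argument in full and then indicate the minor modifications needed for the $(a,b)\dd(c,d)$ case, since the move set in Theorem~\ref{edge contraction} is symmetric enough that essentially the same peeling argument applies after one preliminary flip.
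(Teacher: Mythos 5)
This statement is not proved in the paper at all: Theorem~\ref{A 4 parts} is imported verbatim from \textbf{\cite{Coll2}}, so there is no in-paper argument to compare against. Your plan is essentially the strategy of that cited source: wind the meander down with the signature moves of Theorem~\ref{edge contraction}, show that each reduction step preserves $\gcd(a+b,b+c)$ (e.g.\ $(a,b,c)\dd(n)\mapsto(b,c)\dd(n-2a,a)$ sends the invariant to $\gcd(b+c,\,b+2c-a)=\gcd(b+c,\,a-c)=\gcd(a+b,b+c)$, the Euclidean-algorithm step you allude to), and finish by induction with Theorem~\ref{DKformula}, treating $(a,b)\dd(c,d)$ in parallel since one reduction step interchanges the two families. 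Two details of your sketch would need repair in a full write-up: the terminal configuration is in general a collection of nested cycles together with at most one isolated vertex, not ``a single path or a single cycle,'' and the index is read off as components plus cycles minus one, so the bookkeeping of how many cycles accumulate must be tied to the preserved gcd rather than decided only ``at the very end''; also, the parity condition of Corollary~\ref{2 odd parts} is only necessary, so it cannot serve as a check that $\gcd(a+b,b+c)=1$ ``exactly captures'' Frobenius-ness, only as a one-directional sanity test.
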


One might conjecture the existence of similar ``closed" index formulas for more general seaweeds but using signature moves and complexity arguments Karnauhova and Liebscher have recently shown that there are severe restrictions.

\begin{thm}[\textbf{\cite{Kar}}, 2015]\label{5 parts} If $m\geq 4$ is given, then there do not exist homogeneous polynomials $f_1,f_2\in \Z[x_1,\dots ,x_m]$ of arbitrary degree such that the number of connected components of
$M^A_{n} ((a_1,\dots,a_m) \dd (n))$ is given by
$\gcd(f_1(a_1,\dots ,a_m),f_2(a_1,\dots ,a_m))$.

\end{thm}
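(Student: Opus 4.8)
The plan is to argue by contradiction. Fix $m\ge 4$ and suppose there are homogeneous $f_1,f_2\in\Z[x_1,\dots,x_m]$ such that, writing $c(\ul a)$ for the number of connected components of $M^\A_n((a_1,\dots,a_m)\dd(n))$ with $n=a_1+\cdots+a_m$, one has $c(\ul a)=\gcd\!\left(f_1(\ul a),f_2(\ul a)\right)$ for every composition $\ul a$ of length $m$. Order the degrees so that $d_1:=\deg f_1\le\deg f_2=:d_2$. The argument proceeds in two stages: first force $f_1$ and $f_2$ to be linear forms with coefficients in $\{0,1\}$, and then eliminate each of the finitely many surviving candidates using the signature.

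\emph{Stage 1 (degree reduction).} Since $M^\A_n((a_1,\dots,a_m)\dd(n))$ has exactly $n$ vertices, $c(\ul a)\le a_1+\cdots+a_m$. For a fixed $\ul a$ with $f_1(\ul a)f_2(\ul a)\ne 0$, substituting $\ul a\mapsto t\ul a$ and using homogeneity gives $t^{d_1}\le\gcd\!\left(t^{d_1}f_1(\ul a),t^{d_2}f_2(\ul a)\right)=c(t\ul a)\le t\,(a_1+\cdots+a_m)$ for all $t\ge 1$, forcing $d_1\le 1$; moreover $d_1=0$ would make $c(\ul a)=\gcd\!\left(f_1(\ul a),f_2(\ul a)\right)$ a bounded function of $\ul a$, contradicting the unboundedness of $c$ (for instance $c(N,\dots,N)\to\infty$ as $N\to\infty$, computable from Theorem~\ref{DKformula}), so $d_1=1$. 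To rule out $d_2\ge 2$, observe that then $c(t\ul a)=t\,\gcd\!\left(f_1(\ul a),t^{d_2-1}f_2(\ul a)\right)$, so $c(t\ul a)/t$ equals $f_1(\ul a)$ whenever $t$ is divisible by sufficiently high powers of the primes dividing $f_1(\ul a)$, but equals $\gcd(f_1(\ul a),f_2(\ul a))<f_1(\ul a)$ whenever $t$ is coprime to $f_1(\ul a)$; hence $c(t\ul a)/t$ is non-constant for generic $\ul a$. On the other hand, iterating the winding-down moves of Theorem~\ref{edge contraction} on the ray $t\mapsto M^\A_{tn}((ta_1,\dots,ta_m)\dd(tn))$ performs the same Euclidean-type step at every scale, so $c(t\ul a)/t$ is eventually constant in $t$ --- a contradiction. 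Thus $d_1=d_2=1$, and a linear form with integer coefficients bounded above by $x_1+\cdots+x_m$ on all positive inputs must, after absorbing signs, have every coefficient in $\{0,1\}$; so $f_j(\ul a)=\sum_{i\in S_j}a_i$ for some $S_j\subseteq\{1,\dots,m\}$.

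\emph{Stage 2 (combinatorial contradiction).} Restricting the hypothesized identity to compositions of the form $(a_1,a_2,a_3,a_4,1,\dots,1)$ and applying the block-elimination and rotation-contraction moves of Theorem~\ref{edge contraction} to absorb the trailing parts, one obtains on a dense set of $4$-tuples an identity $c'(a_1,a_2,a_3,a_4)=\gcd\!\left(\sum_{i\in S_1'}a_i,\ \sum_{i\in S_2'}a_i\right)$ for the length-$4$ component count $c'$ and induced subsets $S_1',S_2'\subseteq\{1,2,3,4\}$. There are only finitely many such pairs, and the flip move together with the left-right reversal of the meander reduces them further; so it remains to refute each one. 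For a fixed candidate I would compute $c'$ with the signature along a short list of one- and two-parameter families chosen so that along each family the candidate $\gcd\!\left(\sum_{S_1'}a_i,\sum_{S_2'}a_i\right)$ is rigid --- either identically $1$ because its two arguments become coprime linear expressions, or strictly increasing and unbounded --- whereas an explicit winding-down of $M^\A((a,b,c,d)\dd(n))$ shows that $c'$ does the opposite on that family.

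The main obstacle is Stage 2. One must verify, with only finitely many checkable test families, that the genuine component-count function $c'$ of the four-block meander is the $\gcd$ of no pair of subset-sums of its arguments. The reason this is true --- and the technical heart of the Karnauhova--Liebscher argument --- is that winding down $M^\A((a,b,c,d)\dd(n))$ is governed by a three-term, continued-fraction-like reduction whose output fluctuates in a manner that no two-term Euclidean algorithm applied to a pair of linear forms can reproduce; turning this principle into a finite, explicit obstruction is the delicate part. The case of general $m\ge 4$ then follows by the same reduction of the last $m-4$ parts.
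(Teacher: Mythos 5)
You are attempting to prove a statement that the paper itself does not prove: Theorem~\ref{5 parts} is quoted from Karnauhova--Liebscher \cite{Kar}, so your argument has to stand on its own, and it does not. The decisive part, your Stage~2, is a description of an argument rather than an argument: you say you \emph{would} refute each surviving candidate by running the signature along ``a short list'' of families, but no family is exhibited, no candidate is actually refuted, and no reason is given that finitely many test families can eliminate every candidate; you yourself flag this as ``the technical heart'' and leave it open. The reduction from general $m$ to $m=4$ is also not of the asserted shape: setting $a_5=\cdots=a_m=1$ turns a subset sum $\sum_{i\in S}x_i$ into the \emph{affine} expression $\sum_{i\in S\cap\{1,2,3,4\}}a_i+|S\setminus\{1,2,3,4\}|$, and the B and R moves you invoke to absorb the trailing parts alter $a_1,\dots,a_4$ as well, so you do not land on an identity ``$c'=\gcd$ of two subset sums of four variables.''

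Stage~1 is also gapped in two places. First, the hypothesis only yields $\gcd\bigl(f_1(\ul{a}),f_2(\ul{a})\bigr)\le a_1+\cdots+a_m$; this bounds the gcd, not $f_1$ itself, so the conclusion that the coefficients of $f_1,f_2$ lie in $\{0,1\}$ does not follow --- nothing you have said excludes candidates such as $\gcd\bigl(2(a_1+a_2),\,a_2+a_3\bigr)$, which your Stage~2 case list would then never examine. Second, the claim that $c(t\ul{a})/t$ is eventually constant in $t$ because winding down ``performs the same Euclidean-type step at every scale'' is false as stated: the branch conditions in Theorem~\ref{edge contraction} are scale-invariant, but the contribution of a component-elimination move depends on parities, which change with $t$. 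Already for $\ul{a}=(1,1,1,1)$ one computes $c(t\ul{a})/t=1$ for $t$ even and $(t+1)/t$ for $t$ odd, so the quotient oscillates and is never eventually constant; what your $d_2\ge 2$ contradiction actually needs is at least convergence of $c(t\ul{a})/t$, and establishing that requires precisely the quantitative control of the reduction that you have deferred. For comparison, the argument in \cite{Kar} does not first force $f_1,f_2$ to be linear and then check finitely many cases; it rules out gcds of homogeneous polynomials of arbitrary degree directly by analyzing component counts of bi-rainbow meanders along suitably chosen families. As written, your proposal is a plausible plan, not a proof.
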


\section{Type C - $\mathfrak{sp}(2n)$}

\subsection{Symplectic seaweeds}

Following the work on $\mf{sl}(n)$ we introduce symplectic seaweeds and associate to them planar graphs called symplectic meanders.  We show that  the index can be computed from simple graph theoretic properties of the symplectic meander, or from the number of
certain cycles in the
disjoint cycle decomposition of the associated permutation.

Let $\mf sp(2n)$ be the algebras of matrices with the following block form
\[\mf sp(2n)=\left\{\begin{bmatrix} A & B \\ C & -\wh{A}\end{bmatrix}: B=\wh{B}, C=\wh{C} \right\},\]
where $A, B,$ and $C$ are $n\times n$ matrices and $\wh{A}$ is the transpose of
$A$ with respect to the anitdiagonal.
Choose the same triangular decomposition as was done in the $\mf{sl}(n)$ case,
that is $\mf{sp}(2n)=\mf{u_+}\oplus\mf{h}\oplus\mf{u_-}$.
Let $\Pi=\{\alpha_1,\dots ,\alpha_{n}\}$ denote its set of simple roots,
where $\alpha_n$ is the exceptional root.
Let $\mf{p}_n^\C(\Psi \dd \Psi')$ denote a seaweed subalgebra where
$\Psi$ and $\Psi'$ are subsets of $\Pi$.

Let $C_{\leq n}$ denote the set of strings of positive integers whose sum
is less than or equal to $n$, and call each integer in the string a part.
It will be convenient for us to index seaweeds in $\mf{sp}(2n)$ by pairs of elements from $\Cn$.
Let $\mathcal{P}(X)$ denote the power set of a set $X$.
Given $\ul{a}=(a_1,a_2,\dots ,a_m)\in \Cn$, define a bijection $\varphi_\C:\Cn\rightarrow \mathcal{P}(\Pi)$ by
\[\varphi_\C(\ul{a})=\{\alpha_{a_1},\alpha_{a_1+a_2},\dots ,\alpha_{a_1+a_2+\dots +a_m}\}\]
Then define \[\mf{p}_n^\C(\ul{a} \dd \ul{b})=
\mf{p}_n^\C(\varphi_\C(\ul{a}) \dd \varphi_\C(\ul{b})).\]
Note that we need to keep the subscript $n$, since this is not determined by either $\ul{a}$ or $\ul{b}$.


For example let $n=3$ and consider the seaweed $\mf{p}_3^\C((2,1) \dd (1))
 =\mf{p}_3^\C(\{\alpha_2,\alpha_3\} \dd \{\alpha_1\})$. By construction,
this is the algebra of matrices in
$\mf{sp}(6)$ of the form in Figure \ref{Cseaweed} below,
where * indicates the possible nonzero entries.
\begin{figure}[H]
\[\begin{tikzpicture}
\draw (0,0) -- (0,6);
\draw (0,6) -- (6,6);
\draw (6,6) -- (6,0);
\draw (6,0) -- (0,0);
\draw [line width=3](0,6) -- (0,4);
\draw [line width=3](0,4) -- (2,4);
\draw [line width=3](2,4) -- (2,3);
\draw [line width=3](2,3) -- (3,3);
\draw [line width=3](3,3) -- (3,2);
\draw [line width=3](3,2) -- (4,2);
\draw [line width=3](4,2) -- (4,0);
\draw [line width=3](4,0) -- (6,0);
\draw [line width=3](0,6) -- (1,6);
\draw [line width=3](1,6) -- (1,5);
\draw [line width=3](1,5) -- (5,5);
\draw [line width=3](5,5) -- (5,1);
\draw [line width=3](5,1) -- (6,1);
\draw [line width=3](6,1) -- (6,0);
\draw [dotted] (0,3) -- (6,3);
\draw [dotted] (3,6) -- (3,0);
\draw [dotted] (0,6) -- (6,0);
\draw [dotted] (0,0) -- (6,6);

\node at (.5,5.4) {{\LARGE *}};
\node at (.5,4.4) {{\LARGE *}};
\node at (1.5,4.4) {{\LARGE *}};
\node at (2.5,4.4) {{\LARGE *}};
\node at (3.5,4.4) {{\LARGE *}};
\node at (4.5,4.4) {{\LARGE *}};
\node at (2.5,3.4) {{\LARGE *}};
\node at (3.5,3.4) {{\LARGE *}};
\node at (4.5,3.4) {{\LARGE *}};
\node at (3.5,2.4) {{\LARGE *}};
\node at (4.5,2.4) {{\LARGE *}};
\node at (4.5,1.4) {{\LARGE *}};
\node at (4.5,0.4) {{\LARGE *}};
\node at (5.5,0.4) {{\LARGE *}};
\node at (-.25,5) {2};
\node at (1.75,3.5) {1};
\node at (.5,6.25) {1};

\end{tikzpicture}\]
\caption{The shape of elements from $\mf{p}_3^\C((2,1) \dd (1))$}
\label{Cseaweed}
\end{figure}

The following theorems of Panyushev give inductive formulas for
computing the index of a symplectic seaweed.

\begin{thm}[\textbf{\cite{Panyushev1}}, Theorem 5.2]\label{C inductive}
Let $\ul{a}\neq\emptyset$ and $\ul{b}\neq\emptyset$.
Consider the seaweed $\mf{p}_n^\C(\ul{a} \dd \ul{b})$ where
$\ul{a}=(a_1,a_2,\dots ,a_m)$ and $\ul{b}=(b_1,b_2,\dots ,b_t)$.

\begin{enumerate}[(i)]
\item If $a_1=b_1$ then
\[\ind \mf{p}_n^\C(\ul{a} \dd \ul{b})
=a_1+\ind \mf{p}_{n-a_1}^\C((a_2,a_3,\dots a_m) \dd (b_2,b_3,\dots b_t)).\]

\

\item If $a_1<b_1$ then
\[\ind \mf{p}_n^\C(\ul{a} \dd \ul{b})=
\begin{cases}
\ind \mf{p}_{n-a_1}^\C((a_2,a_3,\dots a_m) \dd (b_1-2a_1,a_1,b_2,b_3,\dots b_t))
& \text{ if }a_1\leq b_1/2 \\
\ind \mf{p}_{n-b_1+a_1}^\C((2a_1-b_1,a_2,a_3,\dots a_m) \dd (a_1,b_2,b_3,\dots b_t))
& \text{ if }a_1> b_1/2.
\end{cases}\]

\end{enumerate}

\end{thm}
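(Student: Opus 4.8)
The plan is to read off $\ind \mf{p}_n^\C(\ul a \dd \ul b)$ from the explicit block-matrix realization pictured above, using the standard fact that $\ind \mf q = \dim \mf q^{\xi}$ for generic $\xi \in \mf q^{*}$ (equivalently $\ind \mf q = \dim \mf q - \max_{f \in \mf q^{*}} \operatorname{rank} B_f$). Since each item is a single reduction identity, it suffices to relate the seaweed to its reduced counterpart by an explicit structural map: a Lie-algebra direct sum in (i), and a parabolic descent in (ii).

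For item (i), $a_1 = b_1 = c$: the parabolic attached to $\ul a$ carries a $\mf{gl}(c)$ Levi block in the top-left $c \times c$ corner, and so does the parabolic attached to $\ul b$. Intersecting the two and tracking which entries of $A$, $B$, $C$ are thereby forced to vanish, one checks that rows and columns $1, \dots, c$ meet the seaweed only inside that top-left $c \times c$ block; by the symplectic constraint its antidiagonal transpose fills the bottom-right $c \times c$ corner and contributes no new freedom. Deleting those rows and columns leaves precisely the block-matrix model of $\mf{p}_{n-c}^\C((a_2, \dots, a_m) \dd (b_2, \dots, b_t))$ inside $\mf{sp}(2(n-c))$, so $\mf{p}_n^\C(\ul a \dd \ul b) \cong \mf{gl}(c) \oplus \mf{p}_{n-c}^\C((a_2, \dots, a_m) \dd (b_2, \dots, b_t))$ as Lie algebras. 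As the index is additive over direct sums and $\ind \mf{gl}(c) = c$, item (i) follows.

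For item (ii), $a_1 < b_1$: there is no direct-sum splitting. Now the first $a_1$ rows and columns meet the seaweed in a $\mf{gl}(a_1)$ corner together with an $a_1 \times (b_1 - a_1)$ off-diagonal slab (and their symplectic reflections), because the $\ul a$-parabolic records a full $\mf{gl}(a_1)$ block there while the $\ul b$-parabolic only sees these coordinates inside its larger $\mf{gl}(b_1)$ Levi. I would extract from this configuration an abelian ideal $\mf a$, write $\mf{p}_n^\C(\ul a \dd \ul b) = \mf q \ltimes \mf a$, and apply Ra\"is's theorem in the form used by Panyushev: for generic $\lambda \in \mf a^{*}$, $\ind(\mf q \ltimes \mf a) = \dim \mf a - \dim(\mf q \cdot \lambda) + \ind(\mf q_{\lambda})$, and then identify $\mf q_{\lambda}$ with a symplectic seaweed of smaller rank. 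The two sub-cases record whether the left $a_1$-block and its antidiagonal image fit inside a single $\ul b$-block of size $b_1$ without meeting --- the case $a_1 \le b_1/2$, which removes $2a_1$ from $b_1$ and reinserts an $a_1$-block, yielding top $(a_2, \dots, a_m)$ and bottom $(b_1 - 2a_1, a_1, b_2, \dots, b_t)$ --- or overlap in the middle --- the case $a_1 > b_1/2$, where a block of size $2a_1 - b_1$ survives, yielding top $(2a_1 - b_1, a_2, \dots, a_m)$ and bottom $(a_1, b_2, \dots, b_t)$, with $n$ dropping by $b_1 - a_1$.

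The step I expect to be the main obstacle is the bookkeeping inside (ii): exhibiting the abelian ideal $\mf a$, showing that $\mf q$ acts on $\mf a^{*}$ with an orbit large enough that $\dim \mf a - \dim(\mf q \cdot \lambda)$ contributes nothing past the stated recursion, and --- above all --- verifying that the generic stabilizer $\mf q_{\lambda}$ is \emph{exactly} the claimed smaller symplectic seaweed; this is precisely where the ``antidiagonal fold'' forces $b_1$ to drop by $2a_1$ (not $a_1$) and creates the threshold at $b_1/2$. A cleaner route, if one admits material developed later in the paper, is to pass to the folded palindromic type-A meander on $2n$ vertices, compute its index via Theorem \ref{DKformula}, and derive both reductions from the type-A moves of Theorem \ref{edge contraction} after checking that they respect the reflection symmetry --- but that argument is logically downstream of the symplectic-meander construction.
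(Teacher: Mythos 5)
This statement is not proved in the paper at all: it is imported verbatim from Panyushev \cite{Panyushev1} (Theorem 5.2 there), so there is no in-paper argument to measure yours against; a proof must either stand on its own or reduce to Panyushev's. Your part (i) does stand on its own: when $a_1=b_1=c$, the first $c$ rows and columns meet the seaweed only in the full upper-left $c\times c$ corner, the symplectic condition determines the lower-right corner from it, and the algebra splits as $\mathfrak{gl}(c)\oplus\mf{p}^{\C}_{n-c}((a_2,\dots,a_m)\dd(b_2,\dots,b_t))$; additivity of the index over direct sums and $\ind\mathfrak{gl}(c)=c$ finish that case. That portion is complete and correct.

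Part (ii), however, is only a plan, and the plan stops exactly where the content lies. You propose to write the seaweed as $\mathfrak q\ltimes\mathfrak a$ with $\mathfrak a$ abelian and to invoke Ra\"{\i}s's formula $\ind(\mathfrak q\ltimes\mathfrak a)=\dim\mathfrak a-\dim(\mathfrak q\cdot\lambda)+\ind\mathfrak q_\lambda$, which is indeed the engine behind Panyushev's reductions, but you never exhibit $\mathfrak a$, never compute the generic orbit, and never verify that the generic stabilizer $\mathfrak q_\lambda$ is the claimed smaller symplectic seaweed --- and you say so yourself. The whole dichotomy $a_1\le b_1/2$ versus $a_1>b_1/2$, and the fact that $b_1$ drops by $2a_1$ rather than $a_1$, is produced precisely by that unperformed stabilizer computation (the antidiagonal fold), so as written (ii) is asserted, not proved. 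Your fallback route --- deduce (ii) from the symplectic meander together with the type-A moves of Theorem \ref{edge contraction} --- is unusable in the logical order of this paper, as you correctly suspect: Theorem \ref{symplectic index} is itself proved by induction \emph{from} the present theorem, so that argument would be circular here. To close the gap you must either carry out the Ra\"{\i}s/stabilizer bookkeeping in the matrix model or simply cite \cite{Panyushev1}, which is what the paper does.
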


Note that if $a_1>b_1$, we can use the fact that
$\mf{p}_n^\C(\ul{a} \dd \ul{b})\cong\mf{p}_n^\C(\ul{b} \dd \ul{a})$.

\begin{thm}[\textbf{\cite{Panyushev1}}, Theorem 5.5]\label{C parabolic}
Let $\ul{a}=(a_1,a_2,\dots ,a_m)$.
For parabolic subalgebras in $\mf{sp}(2n)$ we have
\[\ind \mf{p}_n^\C(\ul{a} \dd \emptyset)=n-\left(\sum_{i=1}^{m}a_i\right)+\left(\sum_{i=1}^{m}\lf\frac{a_i}{2}\rf\right).\]
\end{thm}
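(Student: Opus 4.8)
The plan is to prove the parabolic index formula $\ind \mf{p}_n^\C(\ul{a} \dd \emptyset)=n-\left(\sum a_i\right)+\left(\sum \lf a_i/2\rf\right)$ by induction, reducing the general parabolic to successively smaller ones via Panyushev's inductive Theorem \ref{C inductive}. First I would dispose of the base case: when $\ul{a}=\emptyset$, the seaweed $\mf{p}_n^\C(\emptyset \dd \emptyset)$ is all of $\mf{sp}(2n)$, which is reductive, so its index equals its rank $n$; this matches the right-hand side since both sums are empty. The inductive step treats $\ul{a}=(a_1,\dots,a_m)$ with $m\ge 1$ and $\ul{b}=\emptyset$. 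Since $\ul{b}$ is empty we cannot directly apply Theorem \ref{C inductive} (which requires $\ul{b}\neq\emptyset$), so the first maneuver is to peel off $a_1$ by hand, using the structure of the algebra, or equivalently to observe that $\mf{p}_n^\C((a_1,\dots,a_m) \dd \emptyset)$ contains a reductive part acting on the first $a_1$ coordinates glued to a smaller symplectic seaweed on the remaining $2(n-a_1)$ coordinates.

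The cleanest route is probably to split according to the parity of $a_1$. Write $a_1 = 2q$ or $a_1 = 2q+1$. I claim that $\ind \mf{p}_n^\C((a_1,a_2,\dots,a_m) \dd \emptyset) = \lf a_1/2 \rf + \ind \mf{p}_{n-a_1}^\C((a_2,\dots,a_m) \dd \emptyset)$, after which the induction closes immediately: the right-hand side becomes $\lf a_1/2\rf + (n-a_1) - \sum_{i\ge 2} a_i + \sum_{i\ge 2}\lf a_i/2\rf = n - \sum a_i + \sum \lf a_i/2\rf$ as desired. To establish this recursion I would use the block decomposition of $\mf{sp}(2n)$ displayed in the excerpt: the parabolic $\mf{p}_n^\C(\ul{a}\dd\emptyset)$ has a Levi factor whose first block is $\mf{gl}(a_1)$ and whose remaining blocks form the Levi of the corresponding parabolic in $\mf{sp}(2(n-a_1))$; its nilradical, as an $\mf{sp}(2(n-a_1))$-module, contributes the "$\lf a_1/2\rf$" via the index of a parabolic of $\mf{gl}(a_1)$ inside $\mf{sp}$, together with a Heisenberg-type piece. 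Concretely one can invoke the standard additivity of index under such "inflation" constructions, or — more in the spirit of this paper — run Panyushev's Theorem \ref{C inductive} in the auxiliary seaweed $\mf{p}_n^\C((a_1,\dots,a_m)\dd(a_1))$, which by part (i) equals $a_1 + \ind\mf{p}_{n-a_1}^\C((a_2,\dots,a_m)\dd\emptyset)$, and separately compute $\ind\mf{p}_n^\C((a_1,\dots,a_m)\dd(a_1))$ in terms of $\ind\mf{p}_n^\C((a_1,\dots,a_m)\dd\emptyset)$ by analyzing the single extra bottom block of height $a_1$.

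I expect the main obstacle to be exactly this last bookkeeping: relating the index of the parabolic with $\ul{b}=\emptyset$ to the index of the seaweed with $\ul{b}=(a_1)$, i.e. understanding precisely what adding one symmetric bottom triangle of height $a_1$ does to the Kirillov form. The parity of $a_1$ enters here — when $a_1$ is odd the central vertex of the first block carries no top edge, and the extra bottom block either pairs it up (lowering the kernel) or not, which is the source of the floor function. A safe fallback, avoiding the module-theoretic argument entirely, is to prove the formula purely meander-theoretically: once the symplectic meander $M_n^\C(\ul{a}\dd\emptyset)$ is defined in the sequel (with $n-\sum a_i$ "free" vertices on the right, each a contractible component, and each top block of size $a_i$ contributing $\lf a_i/2\rf$ to the cycle-plus-component count), the formula follows from the symplectic analogue of Theorem \ref{DKformula}; but since that analogue is presumably proved later in the paper, for a self-contained argument here I would stick with the inductive reduction via Theorem \ref{C inductive} sketched above, checking the $a_1$ even and $a_1$ odd cases separately to extract the $\lf a_1/2\rf$ term.
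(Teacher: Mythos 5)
First, a point of orientation: the paper does not prove this statement at all --- it is quoted verbatim as Theorem 5.5 of Panyushev \textbf{\cite{Panyushev1}}, and later results in the paper (in particular the proof of Theorem \ref{symplectic index}) rely on it as an external input and as a base case. So there is no internal proof to match; your argument has to stand on its own, and as written it has a genuine gap.

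The gap is the recursion $\ind \mf{p}_n^\C((a_1,\dots,a_m) \dd \emptyset)=\lf a_1/2\rf+\ind \mf{p}_{n-a_1}^\C((a_2,\dots,a_m) \dd \emptyset)$, which is asserted but never established. Theorem \ref{C inductive} is unavailable here (it requires $\ul{b}\neq\emptyset$), and the two substitutes you offer do not close it. The appeal to ``standard additivity of index under inflation'' is not a usable fact: the index is not additive over a Levi decomposition (for the Borel of $\mf{sl}(2)$ the Levi torus has index $1$, the nilradical has index $1$, yet the Borel has index $0$), so the Levi-plus-nilradical bookkeeping is precisely where a real argument --- Panyushev's, in the cited paper --- is needed. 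Your second route fares no better: comparing $\mf{p}_n^\C(\ul{a}\dd (a_1))$ with $\mf{p}_n^\C(\ul{a}\dd\emptyset)$ requires showing that deleting the single bottom block of height $a_1$ changes the index by exactly $\lc a_1/2\rc$, and nothing in the paper (or in general index theory, where passing to a subalgebra can move the index either way) gives this; it is essentially a restatement of the theorem. Finally, the ``safe fallback'' via the symplectic meander count is circular in the context of this paper, since the proof of Theorem \ref{symplectic index} uses Theorem \ref{C parabolic} to handle exactly the case $\ul{b}=\emptyset$. A correct self-contained treatment would have to engage with the Lie-theoretic computation for parabolics in $\mf{sp}(2n)$ (e.g.\ via the rank of the Kirillov form on the nilradical, as in Panyushev's original argument), not with the tools quoted in this paper.
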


Notice that for non-parabolic seaweeds in $\mf{sp}(2n)$, the inductive part of the formula is precisely
the same inductive formula as for seaweeds in $\mf{sl}(n)$. As a corollary to Panyushev's inductive formulas,
we notice that for computing
the index of $\mf{p}_n^\C(\ul{a} \dd \ul{b})$, it suffices to assume one of these strings sums to $n$.

\begin{cor}\label{sums<n} Consider the seaweed $\mf{p}_{n+k}^\C(\ul{a} \dd \ul{b})$ where
$\ul{a}=(a_1,a_2,\dots ,a_m)$ and $\ul{b}=(b_1,b_2,\dots ,b_t)$.
Suppose $\displaystyle n+k>n=\sum_{i=1}^{m}a_i\geq \sum_{i=1}^{t}b_i$, then
\[\ind \mf{p}_{n+k}^\C(\ul{a} \dd \ul{b})=k+\ind \mf{p}_n^\C(\ul{a} \dd \ul{b}).\]

\end{cor}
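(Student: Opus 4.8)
The plan is to reduce to the parabolic case and then apply Theorem \ref{C parabolic} together with Theorem \ref{C inductive}(i). The key observation is that the hypothesis $\sum a_i = n$ while $\sum b_i \le n$ means that the string $\ul{a}$ ``reaches'' the exceptional root $\alpha_n$ but $\ul{b}$ does not, so in the seaweed $\mf{p}_{n+k}^\C(\ul{a} \dd \ul{b})$ the composition $\ul{a}$ behaves like a composition summing to $n$ inside a larger ambient algebra $\mf{sp}(2(n+k))$. First I would set $r = n+k - \sum b_i \ge 1$ and observe that $\mf{p}_{n+k}^\C(\ul{a} \dd \ul{b})$ may be rewritten (up to the isomorphism $\mf{p}_n^\C(\ul{a}\dd\ul{b}) \cong \mf{p}_n^\C(\ul{b}\dd\ul{a})$ and the bijection $\varphi_\C$) so that the roles of top and bottom make $\ul{b}$ the ``shorter'' string.

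The cleanest route is induction on $k$. For the base case $k=0$ there is nothing to prove. For the inductive step, I would flip so that $\ul{b}$ is on top and apply Theorem \ref{C inductive}; but a more direct approach is to use the fact that since $\sum a_i = n < n+k$, the first few reduction steps of Panyushev's algorithm applied to $\mf{p}_{n+k}^\C(\ul{a}\dd\ul{b})$ — working from the $\ul{b}$ side, whose parts we may assume are ``exhausted first'' — never interact with the extra $k$ vertices near the exceptional root. Concretely, I would show that one can peel off the composition $\ul{b}$ entirely, reducing to $\mf{p}_{n+k - \sum b_i}^\C(\ul{a}' \dd \emptyset)$ for a suitable composition $\ul{a}'$, with the index changing in a way that exactly matches what happens when one instead reduces $\mf{p}_n^\C(\ul{a}\dd\ul{b})$ down to $\mf{p}_{n - \sum b_i}^\C(\ul{a}'\dd\emptyset)$. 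Then both parabolic indices are computed by the formula in Theorem \ref{C parabolic}, and the two formulas differ by exactly
\[
\left[(n+k) - \sum_i a_i' \right] - \left[ n - \sum_i a_i' \right] = k,
\]
since the parts $a_i'$ and hence $\sum \lf a_i'/2 \rf$ are identical in the two reductions. This yields the claimed identity.

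Alternatively — and perhaps more transparently — I would argue directly from Theorem \ref{C inductive}(i) and (ii) by induction on $t$, the number of parts of $\ul{b}$. If $t=0$, then $\ul{b}=\emptyset$, and Theorem \ref{C parabolic} gives
\[
\ind \mf{p}_{n+k}^\C(\ul{a}\dd\emptyset) = (n+k) - \sum a_i + \sum \lf a_i/2\rf = k + \left( n - \sum a_i + \sum \lf a_i / 2 \rf \right) = k + \ind \mf{p}_n^\C(\ul{a}\dd\emptyset),
\]
using $\sum a_i = n$. If $t \ge 1$, I would apply one step of Panyushev's reduction (after flipping if necessary so the reduction is applied to $\ul{b}$, the string not reaching the exceptional root) and check that each of the cases in Theorem \ref{C inductive} produces a new seaweed in which: (a) the top string still sums to the new ambient rank minus $k$, and (b) the bottom string has the same total and strictly fewer parts (or the $a_1=b_1$ case, which splits off a summand $a_1$ identically in both the $n+k$ and $n$ instances). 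The inductive hypothesis then finishes it, since the additive constant $k$ is untouched by every reduction move — the extra $k$ vertices simply ride along near the exceptional root and are never contracted until the string $\ul{b}$ is exhausted.

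The main obstacle I anticipate is bookkeeping: verifying in each branch of Theorem \ref{C inductive}(ii) that the invariant ``top string sums to ambient rank minus $k$'' is genuinely preserved, and that the reduction applied to $\mf{p}_{n+k}^\C$ and to $\mf{p}_n^\C$ stays ``in lockstep'' (same sequence of moves, same resulting compositions apart from the ambient subscript). One must also be careful about which string the reduction is legitimately applied to: Panyushev's theorem requires $a_1 \le b_1$ or uses the flip, and one should confirm that the flip does not disturb the arrangement in which $\ul{a}$ is the string attached to the exceptional root — this is automatic since the isomorphism $\mf{p}_n^\C(\ul{a}\dd\ul{b})\cong\mf{p}_n^\C(\ul{b}\dd\ul{a})$ is symmetric and the whole statement is symmetric in which composition carries the "$=n$" constraint once we track it. Nothing here is deep; it is a matter of organizing the induction so the constant $k$ visibly factors out.
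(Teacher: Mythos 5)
Your overall strategy is the right one, and it is essentially what the paper intends: Corollary \ref{sums<n} is stated as an immediate consequence of Panyushev's Theorems \ref{C inductive} and \ref{C parabolic}, and your ``lockstep'' observation -- that the reduction moves depend only on the compositions $\ul{a},\ul{b}$ and not on the ambient rank, so the $\mf{p}_{n+k}^\C$ and $\mf{p}_n^\C$ instances evolve through identical compositions with ranks differing by $k$ until a parabolic is reached, where Theorem \ref{C parabolic} contributes the rank linearly and hence the difference $k$ -- is exactly the content one needs.

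However, the induction scheme you propose does not terminate as written, and one of your invariants is false. Induction on $k$ does nothing, since no reduction move changes $k$. Induction on $t$ also fails: in Theorem \ref{C inductive}(ii) with $a_1\le b_1/2$ the bottom string becomes $(b_1-2a_1,a_1,b_2,\dots,b_t)$, which has \emph{more} (or equally many) parts and total $\sum b_i-a_1$, and with $a_1>b_1/2$ it becomes $(a_1,b_2,\dots,b_t)$, again with $t$ parts and a strictly smaller total; so your claim (b), that the bottom string keeps the same total and has strictly fewer parts, is wrong in both branches, and only the $a_1=b_1$ case actually shortens $\ul{b}$. The repair is simple and is the induction the paper itself uses in the proof of Theorem \ref{symplectic index}: induct on the ambient rank, which strictly decreases (by $a_1$ or by $b_1-a_1>0$) at every step. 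One should then note the invariants that do hold -- the multiset $\{\,\mathrm{rank}-\sum a_i,\ \mathrm{rank}-\sum b_i\,\}$ is preserved by all three moves and merely permuted by the flip, so the ``$+k$'' discrepancy rides along -- and check the terminal cases ($\ul{b}'=\emptyset$ via Theorem \ref{C parabolic}, or both strings empty, where $\ind\mf{sp}(2N)=N$), each of which yields a difference of exactly $k$ since the surviving compositions coincide. With that bookkeeping corrected, your argument is complete and coincides with the paper's intended proof.
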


\subsection{Symplectic meanders}

Given a seaweed $\mf{p}_n^\C(\ul{a} \dd \ul{b})$, associate
to this seaweed a symplectic meander, which we denote $M_n^\C(\ul{a} \dd \ul{b})$.
The construction is very similar to (type A) meanders.
We label the vertices of $M_n^\C(\ul{a} \dd \ul{b})$ as $1, 2, \dots ,n$ from left to right along a
horizontal line. We begin by placing edges above the horizontal line, called top edges,
according to $\ul{a}$ as follows.
Let $\ul{a}=(a_1,a_2,\dots , a_m)$, and let $V_i$ be the $i^{\text{th}}$ block of vertices, that is the subset
of vertices whose label is greater than $a_1+a_2+\dots +a_{i-1}$ and less than $a_1+a_2+\dots +a_i+1$.
For each block $V_i$, place an edge from vertex $j$ to vertex $k$
if $j+k=2(a_1+a_2+\dots+a_{i-1})+a_i+1$. Next, in the same way,
place bottom edges according to $\ul{b}$.
Finally, we designate a special subset of the vertices $T=T_n(\ul{a}\dd\ul{b})$
called the\textit{ tail} of the symplectic meander as follows:
If $\ul{a}\in \Cn$, let $r=\sum a_i$ and define a subset of vertices
$T_n(\ul{a})=\{r+1,r+2,\dots ,n\}$ then $T_n(\ul{a}\dd\ul{b})$ is the symmetric difference of
$T_n(\ul{a})$ and $T_n(\ul{b})$, i.e.,
\[T=T_n(\ul{a}\dd\ul{b})=\left(T_n(\ul{a})\cup T_n(\ul{b})\right)\setminus\left(T_n(\ul{a})\cap T_n(\ul{b})\right).\]
Note that if $\sum b_i\leq \sum a_i$, then $T=T_n(\ul{a}\dd\ul{b})=T_n(\ul{b})\setminus T_n(\ul{a})$.

\bigskip
\noindent
\begin{example} The symplectic meander $M_{11}^\C((2,1,1,6) \dd (2,2,1,2))$ has tail $T=\{8,9,10\}$.
We color the tail vertices yellow and visualize the graph as follows.

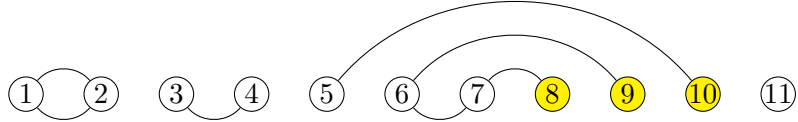
\begin{figure}[H]
\[\begin{tikzpicture}
\vertex (1) at (1,0) {1};
\vertex (2) at (2,0) {2};
\vertex (3) at (3,0) {3};
\vertex (4) at (4,0) {4};
\vertex (5) at (5,0) {5};
\vertex (6) at (6,0) {6};
\vertex (7) at (7,0) {7};
\vertex[fill=yellow] (8) at (8,0) {8};
\vertex[fill=yellow] (9) at (9,0) {9};
\vertex[fill=yellow] (10) at (10,0) {10};
\vertex (11) at (11,0) {11};


\path
(1) edge[bend left=50] (2)
(5) edge[bend left=50] (10)
(6) edge[bend left=50] (9)
(7) edge[bend left=50] (8)
(1) edge[bend right=50] (2)
(3) edge[bend right=50] (4)
(6) edge[bend right=50] (7)
;\end{tikzpicture}\]
\caption{The meander $M_{11}^\C((2,1,1,6) \dd (2,2,1,2))$}
\end{figure}

\noindent
Note that both strings $(2,1,1,6)$ and $(2,2,1,2)$ have sum less than 11.
We can easily obtain the graph $M_{10}^\C((2,1,1,6) \dd (2,2,1,2))$
from the graph $M_{11}^\C((2,1,1,6) \dd (2,2,1,2))$ by removing vertex 11,
and the tail remains the same.
\end{example}

Given a symplectic meander $M_n^\C(\ul{a} \dd \ul{b})$,
we define top and bottom bijections $t$ and $b$ exactly as we did before,
and and associate to the symplectic meanders a permutation $\sigma_{n,\ul{a},\ul{b}}$
defined by $\sigma_{n,\ul{a},\ul{b}}(j)=t(b(j))$. For example if $\ul{a}=(2,1,1,6)$ and $\ul{b}=(2,2,1,2)$
are string in $C_{\leq 11}$,
then the associated permutation written in disjoint cycle form
is $\sigma_{11,\ul{a},\ul{b}}=(1)(2)(3,4)(5,10)(6,8,7,9)(11)$.

The following theorem is the Type C analogue of the component formula of Dergachev and A. Kirillov (\ref{DKformula}) for the Type A case.

\begin{thm}\label{symplectic index}
Consider the seaweed $\mf{p}_n^\C(\ul{a} \dd \ul{b})$, and let $T=T_n(\ul{a}\dd\ul{b})$.

\begin{enumerate}[(i)]
\item The index of $\mf{p}_n^\C(\ul{a} \dd \ul{b})$ is equal to the number of cycles plus the number of connected
components containing either 0 or 2 vertices from $T$ in the graph $M_n^\C(\ul{a} \dd \ul{b})$.

\item  The index of $\mf{p}_n^\C(\ul{a} \dd \ul{b})$ is equal to the number of cycles containing either
0 or 2 integers from $T$ in the disjoint cycle decomposition of $\sigma_{n,\ul{a},\ul{b}}$
(here we view $T$ as a set of integers).

\end{enumerate}

\end{thm}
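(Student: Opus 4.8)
The plan is to prove Theorem \ref{symplectic index} by induction on $n$, using Panyushev's inductive formulas (Theorems \ref{C inductive} and \ref{C parabolic}, together with Corollary \ref{sums<n}) to drive the recursion, and checking that each reduction step on the algebra side is mirrored by a corresponding move on the symplectic meander that preserves the claimed count. First I would observe that (i) and (ii) are equivalent: a connected component of $M_n^\C(\ul a\dd\ul b)$ that is a cycle in the graph-theoretic sense (every vertex meeting both a top and a bottom edge) corresponds to an even-length cycle of $\sigma_{n,\ul a,\ul b}$, while a path component corresponds to an odd-length cycle, and the vertices of $T$ lying in a component are exactly the integers of $T$ lying in the corresponding cycle of $\sigma_{n,\ul a,\ul b}$; so it suffices to prove (i). I would then record the easy structural fact that in any component the number of $T$-vertices is $0$, $1$, or $2$, with value $1$ precisely when the component is a path having exactly one endpoint in $T$ (this uses $|T_n(\ul a)\cap T_n(\ul b)|$ being removed in the symmetric difference, so $T$ is a union of intervals at the right end and every $T$-vertex has no top edge or no bottom edge or neither). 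Components counted by the theorem are thus: all cycles, all paths with both ends in $T$ or neither end in $T$; the \emph{uncounted} objects are exactly the paths with exactly one endpoint in $T$.

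The base cases are the parabolics $\mf p_n^\C(\ul a\dd\emptyset)$ (and the symmetric case): here $\ul b=\emptyset$, so $T=T_n(\ul a)=\{r+1,\dots,n\}$ with $r=\sum a_i$, the only top edges come from the $V_i$, and there are no bottom edges at all. Every component is then either a single top edge (a path with both endpoints outside $T$, contributing $1$), a fixed point in some $V_i$ with $a_i$ odd (a path with no $T$-vertex, contributing $1$), or a single tail vertex in $T$ (a path with one $T$-endpoint, contributing $0$). Counting, the number of counted components is $\sum_i \lceil a_i/2\rceil = \sum_i(a_i - \lfloor a_i/2\rfloor)$ wait—more carefully, each block $V_i$ contributes $\lfloor a_i/2\rfloor$ edges and, if $a_i$ is odd, one extra fixed point, so $\lceil a_i/2\rceil$ counted components from $V_i$; summing and comparing with $n-\sum a_i+\sum\lfloor a_i/2\rfloor$ from Theorem \ref{C parabolic} — the $n-\sum a_i = |T|$ tail vertices each contribute $0$, and $\sum_i\lceil a_i/2\rceil$ does not match $\sum_i\lfloor a_i/2\rfloor$ in general, so I would need to recheck the edge/fixed-point bookkeeping against the precise meander construction; this arithmetic reconciliation is routine once the definitions are unwound. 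Corollary \ref{sums<n} handles the reduction to the case $\sum a_i = n$ or $\sum b_i=n$, since adjoining $k$ extra tail vertices with no incident edges adds exactly $k$ singleton path-components with one $T$-endpoint each — contributing $0$ to the count — matching the $k$ added on the index side.

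For the inductive step I would treat the three cases of Theorem \ref{C inductive} by analyzing the corresponding meander surgery, exactly as in the Type A signature theory (Theorem \ref{edge contraction}), but now tracking the tail. In case (i), $a_1=b_1$: the vertices $1,\dots,a_1$ form a self-contained union of matched top and bottom edges (and possibly one central fixed point if $a_1$ odd), i.e., $\lfloor a_1/2\rfloor$ cycles plus, if $a_1$ odd, one isolated path with no $T$-vertex — so $\lceil a_1/2\rceil$ counted components; this should be reconciled with the $+a_1$ in Panyushev's formula by the same bookkeeping as the base case, and removing these vertices and relabeling yields $M_{n-a_1}^\C((a_2,\dots)\dd(b_2,\dots))$ with the correctly shifted tail. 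In case (ii), the pure-contraction / block-elimination / rotation-contraction moves apply: contracting the bottom edges on vertices $1,\dots,a_1$ (or $1,\dots,b_1-a_1$) and deleting them is a homotopy on the part of the meander away from $T$ and leaves the tail unchanged as a vertex set, so the component structure and the $T$-content of every surviving component is preserved, and the new graph is the meander of the reduced seaweed named in Theorem \ref{C inductive}(ii) — here one must check that the $b_1-2a_1$ or $2a_1-b_1$ appearing in Panyushev's new composition is exactly what the contraction produces, and that no spurious cycle through $T$ is created or destroyed. The main obstacle, and the place I expect to spend the most care, is precisely this last point in case (ii): verifying that the tail $T$ of the reduced meander, computed afresh from the new pair of compositions via the symmetric-difference definition, coincides with the image of the old $T$ under the deletion/relabeling — in other words, that Panyushev's reduction is compatible with the tail bookkeeping — because the reductions rewrite $\ul b$ in a way ($b_1\mapsto b_1-2a_1$, prepending $a_1$, etc.) that changes $\sum b_i$ and hence a priori changes $T_n(\ul b)$; making this match up is the crux of the whole argument, and once it is established the remaining verifications are the same routine parity and counting checks as in Type A.
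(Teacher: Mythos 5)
Your overall strategy is exactly the paper's: reduce (ii) to (i), normalize to $\sum a_i=n$ via Corollary \ref{sums<n}, then induct on $n$ with Theorem \ref{C parabolic} as the base case and the cases of Theorem \ref{C inductive} mirrored by the meander contractions of Theorem \ref{edge contraction}, the delicate point being that those contractions respect the tail. But the execution has genuine errors, all traceable to misreading two definitions, and they are not the ``routine reconciliation'' you defer. First, the statistic in part (i) counts every cycle \emph{twice} (once as a cycle and once as a component with $0$ tail vertices; see the worked example, where a cycle contributes $2$). Hence in the case $a_1=b_1$ the subgraph $H$ on vertices $1,\dots,a_1$ has count $2\lf a_1/2\rf+(a_1\bmod 2)=a_1$, exactly matching the $+a_1$ of Theorem \ref{C inductive}(i); your $\lc a_1/2\rc$ is an undercount and that case does not close as written. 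Second, you have the tail backwards in the degenerate situations. For $\ul{b}=\emptyset$ one has $T_n(\emptyset)=\{1,\dots,n\}$, so the symmetric difference is $T=\{1,\dots,r\}$ (all vertices once $\sum a_i=n$), \emph{not} $T_n(\ul{a})=\{r+1,\dots,n\}$: then each top edge is a path with two tail endpoints (contributes $1$), each odd-block fixed point has one tail endpoint (contributes $0$), and each of the $n-r$ rightmost vertices is an isolated non-tail vertex (contributes $1$), giving $n-\sum a_i+\sum\lf a_i/2\rf$ in exact agreement with Theorem \ref{C parabolic}; the discrepancy you noticed comes from your identification of $T$, not from edge/fixed-point bookkeeping. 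The same confusion undermines your use of Corollary \ref{sums<n}: the $k$ extra vertices lie in $T_{n+k}(\ul{a})\cap T_{n+k}(\ul{b})$ and so are \emph{not} in $T$; they contribute $+k$, which is precisely what matches the corollary, whereas your claim that they are tail vertices contributing $0$ contradicts the very formula you invoke.

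Your reduction of (ii) to (i) is also misstated: a cycle component of the meander does not correspond to a single even-length cycle of $\sigma_{n,\ul{a},\ul{b}}$ — it splits into \emph{two} cycles (in the running example the component on $\{1,2\}$ gives $(1)(2)$), and path components can produce even-length cycles (e.g.\ $(3,4)$). The correct matching, which is what the paper uses, is that each cycle component yields two $\sigma$-cycles with no tail integers (contribution $2$ on both sides), while each path component yields one $\sigma$-cycle with the same tail content as the component (contribution $1$ or $0$ on both sides); with your stated correspondence, (i) and (ii) would disagree on cycle components. On the positive side, you correctly isolate the one point the paper treats tersely: in case (ii) of Theorem \ref{C inductive} one must check that the contractions avoid $T$ and that the tail of the reduced meander, recomputed from the new compositions, is the image of the old tail. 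Your plan to verify that explicitly is sound, but the items above must be corrected before the induction closes.
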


\begin{example}
With the running example, let $G=M_{11}^\C((2,1,1,6) \dd (2,2,1,2))$, and we have $T=\{8,9,10\}$.
We compute the contribution to the index
from each connected component as follows: If the component contains either 0 or 2 vertices from
the tail, then its contribution is exactly the same as in the type A case.  That is, a cycle contributes 2 to the
index, and a path or single vertex contributes 1 to the index. If a component contains exactly 1 vertex
from the tail, then it must be a path or single vertex, and it contributes nothing to the index.
Let $G[S]$ denote the subgraph induced by a set of vertices $S$. The table below
computes the contribution to the index for each component. It follows that
$\ind \mf{p}_{11}^\C((2,1,1,6) \dd (2,2,1,2))=5$.

\begin{figure}[H]
\[\begin{tikzpicture}
\vertex (1) at (1,0) {1};
\vertex (2) at (2,0) {2};
\vertex (3) at (3,0) {3};
\vertex (4) at (4,0) {4};
\vertex (5) at (5,0) {5};
\vertex (6) at (6,0) {6};
\vertex (7) at (7,0) {7};
\vertex[fill=yellow] (8) at (8,0) {8};
\vertex[fill=yellow] (9) at (9,0) {9};
\vertex[fill=yellow] (10) at (10,0) {10};
\vertex (11) at (11,0) {11};

\path
(1) edge[bend left=50] (2)
(5) edge[bend left=50] (10)
(6) edge[bend left=50] (9)
(7) edge[bend left=50] (8)
(1) edge[bend right=50] (2)
(3) edge[bend right=50] (4)
(6) edge[bend right=50] (7)
;\end{tikzpicture}\]

\[\def\arraystretch{1.2}
\begin{tabular}{c||c||c||c}
component & \# of tail vertices & cycle? & contribution to index \\
\hline
$G[\{1,2\}]$ & 0 & yes & 2 \\
$G[\{3,4\}]$ & 0 & no & 1 \\
$G[\{\{11\}]$ & 0 & no & 1 \\
$G[\{6,7,8,9\}]$ & 2 & no & 1 \\
$G[\{5,10\}]$ & 1 & no & 0 \\
\end{tabular}\]
\caption{$M_{11}^\C((2,1,1,6) \dd (2,2,1,2))$ with index computed}
\end{figure}
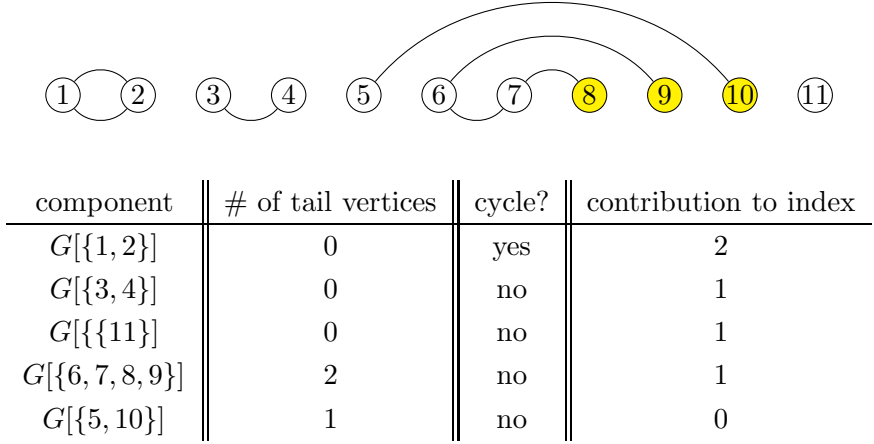
\end{example}

\begin{proof}

A cycle in $M_n^\C(\ul{a} \dd \ul{b})$ cannot contain vertices from $T$, and breaks into two cycles
in $\sigma_{n,\ul{a},\ul{b}}$. A path in $M_n^\C(\ul{a} \dd \ul{b})$ will be a cycle in $\sigma_{n,\ul{a},\ul{b}}$
containing the labels of all the vertices in the path. Thus (i) and (ii) are equivalent, so it suffices to prove (i).

By Corollary \ref{sums<n} and symmetry, it suffices to consider the case when $\sum a_i =n$ and $\sum b_i \leq n$.
Induct on $n$. The base case is trivial.

Given a symplectic meander $G$, let $f(G)$ denote the number of cycles plus the number of connected
components containing either 0 or 2 vertices from $T$ in $G$.

For the inductive step, first consider the case that $\ul{b}=\emptyset$, thus all vertices belong to the tail $T$.
There are no cycles in $M_n^\C(\ul{a} \dd \emptyset)$, and there are no connected components of  containing 0
vertices from $T$. Since each block of vertices $V_i$ is assigned $\lf a_i/2\rf$ top edges,
it follows that
\[f\left(M_n^\C(\ul{a} \dd \emptyset)\right)=\sum_{i=1}^{m}\lf\frac{a_i}{2}\rf = \ind \mf{p}_n^\C(\ul{a} \dd \emptyset),\]
by Theorem \ref{C parabolic}.

To complete the inductive step, now consider the case that $\ul{b}\neq\emptyset$. Suppose $a_1=b_1$.
Let $H$ denote the subgraph of $M_n^\C(\ul{a} \dd \ul{b})$ induced by the vertices labeled 1 through $a_1$,
and let $G$ denote the subgraph induced by the remaining vertices. Then $M_n^\C(\ul{a} \dd \ul{b})=H+G$
and $H$ contains no vertices from $T$. Clearly $f(H)=a_1$, and using the inductive hypothesis on $G$ we have
\[f\left(M_n^\C(\ul{a} \dd \ul{b})\right)=f(H)+f(G)
=a_1+\ind \mf{p}_{n-a_1}^\C((a_2,a_3,\dots a_m) \dd (b_2,b_3,\dots b_t)).\]
By Theorem \ref{symplectic index}, this is equal to $\ind \mf{p}_n^\C(\ul{a} \dd \ul{b})$.

Suppose $a_1\leq b_1/2$. By Theorem \ref{edge contraction} the meander
$\displaystyle G = M_{n-a_1}^\C((a_2,a_3,\dots a_m) \dd (b_1-2a_1,a_1,b_2,b_3,\dots b_t))$
can be obtained from $M_n^\C(\ul{a} \dd \ul{b})$ by edge contractions that
do not delete vertices from $T$. Thus by induction we have
\[f\left(M_n^\C(\ul{a} \dd \ul{b})\right)=f(G)
=\ind \mf{p}_{n-a_1}^\C((a_2,a_3,\dots a_m) \dd (b_1-2a_1,a_1,b_2,b_3,\dots b_t)).\]
And by Theorem \ref{symplectic index}, this is equal to $\ind \mf{p}_n^\C(\ul{a} \dd \ul{b})$.

Similarly, suppose $a_1>b_1/2$. By Theorem \ref{edge contraction} the meander
$\displaystyle G=M_{n-b_1+a_1}^\C((2a_1-b_1,a_2,a_3,\dots a_m) \dd (a_1,b_2,b_3,\dots b_t))$
can be obtained from $M_n^\C(\ul{a} \dd \ul{b})$ by edge contractions that
do not delete vertices from $T$. Thus by induction we have
\[f\left(M_n^\C(\ul{a} \dd \ul{b})\right)=f(G)
=\ind \mf{p}_{n-b_1+a_1}^\C((2a_1-b_1,a_2,a_3,\dots a_m) \dd (a_1,b_2,b_3,\dots b_t)).\]
Again by Theorem \ref{symplectic index}, this is equal to $\ind \mf{p}_n^\C(\ul{a} \dd \ul{b})$.

\end{proof}



The following corollary gives a necessary condition for a symplectic seaweed to have minimal index. This is analogous to Corollary \ref{2 odd parts}.

\begin{cor}\label{c necessary}
Without loss of generality, assume $\sum b_i\leq \sum a_i$. If $\ind \mf{p}_n^\C(\ul{a} \dd \ul{b})=0$, then
$\sum a_i=n$, and $\sum b_i=n-r<n$, and there must be exactly $r$ odd integers among $\ul{a}$ and $\ul{b}$.

\end{cor}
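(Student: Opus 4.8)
The plan is to read all three conclusions off Theorem~\ref{symplectic index}(i), using Corollary~\ref{sums<n} only to force $\sum a_i = n$ and finishing with a one-line edge count.

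First, the claim $\sum a_i = n$. Since $\ul{b}\in\Cn$ and we have assumed $\sum b_i \le \sum a_i$, if $\sum a_i < n$ we may set $n' = \sum a_i$ and apply Corollary~\ref{sums<n} with $k = n - n' > 0$, obtaining $\ind \mf{p}_n^\C(\ul{a} \dd \ul{b}) = k + \ind \mf{p}_{n'}^\C(\ul{a} \dd \ul{b}) \ge k > 0$, contradicting $\ind = 0$. Hence $\sum a_i = n$; write $\sum b_i = n - r$ with $r \ge 0$. Because $\sum a_i = n$ we have $T_n(\ul{a}) = \emptyset$, so the tail is $T = T_n(\ul{b}) = \{\, n-r+1, \dots, n \,\}$, a block of exactly $r$ vertices at the right end, and none of these vertices is incident with a bottom edge, since the bottom edges lie inside the $\ul{b}$-blocks, which cover only the vertices $1, \dots, n-r$.

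Next I would determine the component structure forced by $\ind = 0$. Each vertex of a symplectic meander has top-degree at most $1$ and bottom-degree at most $1$, so every connected component is a path (possibly a single vertex) or a cycle. By Theorem~\ref{symplectic index}(i), $\ind = 0$ forces both that there are no cycles --- so $M_n^\C(\ul{a} \dd \ul{b})$ is a forest --- and that no component contains $0$ or $2$ vertices of $T$. The one point that needs care is ruling out components with three or more tail vertices (a case not addressed directly by the wording of Theorem~\ref{symplectic index}): a tail vertex has bottom-degree $0$, hence total degree at most $1$, so in a forest it can occur only as an endpoint of a path or as an isolated vertex, and a path has at most two endpoints; therefore every component contains at most $2$ vertices of $T$. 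Combining this with the previous sentence, every component contains exactly one vertex of $T$, so the number of components equals $|T| = r$. Since the meander has $n \ge 1$ vertices, it has at least one component, so $r \ge 1$, whence $\sum b_i = n - r < n$, the second assertion.

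Finally, count edges. A forest on $n$ vertices with $r$ components has exactly $n - r$ edges; on the other hand the $\ul{a}$-block $V_i$ carries $\lfloor a_i/2 \rfloor$ top edges and the $\ul{b}$-block $W_j$ carries $\lfloor b_j/2 \rfloor$ bottom edges, so $M_n^\C(\ul{a} \dd \ul{b})$ has $\sum_i \lfloor a_i/2 \rfloor + \sum_j \lfloor b_j/2 \rfloor$ edges in all. Writing $p$ and $q$ for the number of odd parts among $\ul{a}$ and among $\ul{b}$ respectively, and using $\sum a_i = n$, $\sum b_j = n - r$, together with the elementary identity $\sum_i \lfloor a_i/2 \rfloor = \frac{1}{2}\left(\sum_i a_i - p\right)$, we get $\sum_i \lfloor a_i/2 \rfloor = \frac{n-p}{2}$ and $\sum_j \lfloor b_j/2 \rfloor = \frac{n-r-q}{2}$. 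Equating $\frac{n-p}{2} + \frac{n-r-q}{2}$ with $n - r$ and simplifying yields $p + q = r$, i.e. there are exactly $r$ odd integers among $\ul{a}$ and $\ul{b}$. Beyond the degree argument for tail vertices noted above, I anticipate no real obstacle; the remaining steps are the floor identity and the bookkeeping.
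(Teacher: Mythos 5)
Your proof is correct and follows the route the paper intends: the corollary is stated there without proof as an immediate consequence of Theorem~\ref{symplectic index} (with Corollary~\ref{sums<n} forcing $\sum a_i = n$), and your argument supplies exactly that derivation. The two details you make explicit beyond the paper's wording --- ruling out components with three or more tail vertices via the observation that tail vertices have degree at most one, and the floor-function edge count of a forest with $r$ components giving $p+q=r$ --- are precisely the right way to close those small gaps.
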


\begin{example}
The seaweed $\mf{p}_{15}^\C((10,5) \dd (5,8))$ satisfies the conclusion of Corollary \ref{c necessary}.
In fact, it is a Frobenius seaweed since its meander consists of two paths, each path containing exactly one
vertex from the tail, as shown below.
\begin{figure}[H]
\[\begin{tikzpicture}
\vertex (1) at (.9,0) {1};
\vertex (2) at (1.8,0) {2};
\vertex (3) at (2.7,0) {3};
\vertex (4) at (3.6,0) {4};
\vertex (5) at (4.5,0) {5};
\vertex (6) at (5.4,0) {6};
\vertex (7) at (6.3,0) {7};
\vertex (8) at (7.2,0) {8};
\vertex (9) at (8.1,0) {9};
\vertex (10) at (9,0) {10};
\vertex (11) at (9.9,0) {11};
\vertex (12) at (10.8,0) {12};
\vertex (13) at (11.7,0) {13};
\vertex[fill=yellow] (14) at (12.6,0) {14};
\vertex[fill=yellow] (15) at (13.5,0) {15};

\path
(1) edge[bend left=50,color=blue,line width=1.2pt] (10)
(2) edge[bend left=50,color=blue,line width=1.2pt] (9)
(3) edge[bend left=50,color=green,line width=1.2pt] (8)
(4) edge[bend left=50,color=blue,line width=1.2pt] (7)
(5) edge[bend left=50,color=blue,line width=1.2pt] (6)
(11) edge[bend left=50,color=green,line width=1.2pt] (15)
(12) edge[bend left=50,color=blue,line width=1.2pt] (14)

(1) edge[bend right=50,color=blue,line width=1.2pt] (5)
(2) edge[bend right=50,color=blue,line width=1.2pt] (4)
(6) edge[bend right=50,color=blue,line width=1.2pt] (13)
(7) edge[bend right=50,color=blue,line width=1.2pt] (12)
(8) edge[bend right=50,color=green,line width=1.2pt] (11)
(9) edge[bend right=50,color=blue,line width=1.2pt] (10)
;\end{tikzpicture}\]
\caption{$M_{15}^\C((10,5) \dd (5,8))$ with components highlighted}
\end{figure}
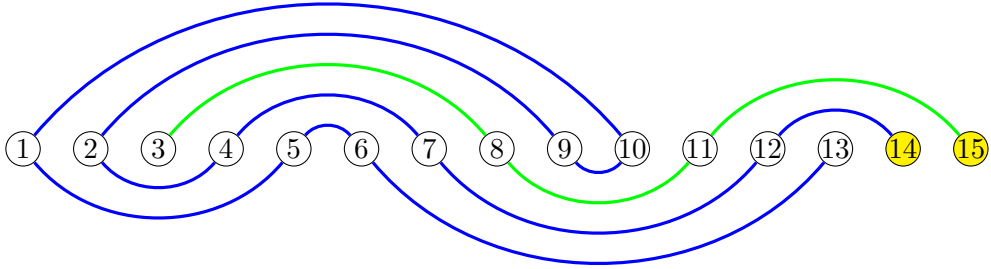

The seaweed $\mf{p}_{15}^\C((10,5) \dd (3,10))$ also satisfies the conclusion of Corollary \ref{c necessary}.
However it is not a Frobenius seaweed since its meander consists of two paths, one of which has two vertices
from the tail, and the other has zero vertices from the tail, as shown below.
In particular, $\ind \mf{p}_{15}^\C((10,5) \dd (3,10))=2$.
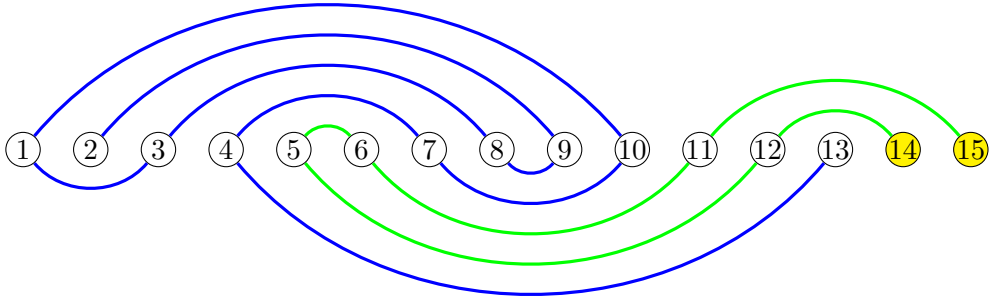
\begin{figure}[H]
\[\begin{tikzpicture}
\vertex (1) at (.9,0) {1};
\vertex (2) at (1.8,0) {2};
\vertex (3) at (2.7,0) {3};
\vertex (4) at (3.6,0) {4};
\vertex (5) at (4.5,0) {5};
\vertex (6) at (5.4,0) {6};
\vertex (7) at (6.3,0) {7};
\vertex (8) at (7.2,0) {8};
\vertex (9) at (8.1,0) {9};
\vertex (10) at (9,0) {10};
\vertex (11) at (9.9,0) {11};
\vertex (12) at (10.8,0) {12};
\vertex (13) at (11.7,0) {13};
\vertex[fill=yellow] (14) at (12.6,0) {14};
\vertex[fill=yellow] (15) at (13.5,0) {15};

\path
(1) edge[bend left=50,color=blue,line width=1.2pt] (10)
(2) edge[bend left=50,color=blue,line width=1.2pt] (9)
(3) edge[bend left=50,color=blue,line width=1.2pt] (8)
(4) edge[bend left=50,color=blue,line width=1.2pt] (7)
(5) edge[bend left=50,color=green,line width=1.2pt] (6)
(11) edge[bend left=50,color=green,line width=1.2pt] (15)
(12) edge[bend left=50,color=green,line width=1.2pt] (14)

(1) edge[bend right=50,color=blue,line width=1.2pt] (3)
(4) edge[bend right=50,color=blue,line width=1.2pt] (13)
(5) edge[bend right=50,color=green,line width=1.2pt] (12)
(6) edge[bend right=50,color=green,line width=1.2pt] (11)
(7) edge[bend right=50,color=blue,line width=1.2pt] (10)
(8) edge[bend right=50,color=blue,line width=1.2pt] (9)
;\end{tikzpicture}\]
\caption{$M_{15}^\C((10,5) \dd (3,10))$ with components highlighted}
\end{figure}
\end{example}

\section{Formulas}

Next we consider symplectic seaweed subalgebras where $\ul{a}$ and $\ul{b}$ have a small number of parts.
Our goal is to give a simple closed formula for the index, or at least characterize the seaweeds of index zero.
Theorem \ref{C parabolic} directly covers all cases when either $\ul{a}=\emptyset$ or $\ul{b}=\emptyset$. The
next case we consider is when $\ul{a}$ and $\ul{b}$ each have one part. This case is easily handled
by applying Theorem \ref{C inductive} and Theorem \ref{C parabolic} of Panyushev, and should be considered
a corollary of these results.

\begin{cor}
If $a=b$ then $\ind \mf{p}_n^\C((a) \dd (b))=n$.
Otherwise, without loss of generality assume that $a>b$, and we have
\[\ind \mf{p}_n^\C((a) \dd (b))=
\begin{cases}
n-a+\lf\frac{n-b}{2}\rf & \text{ if }n\text{ is even}\\
n-a+\lf\frac{n-b-1}{2}\rf & \text{ if }n\text{ is odd}.
\end{cases}\]

\end{cor}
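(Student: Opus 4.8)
The plan is to derive the formula directly from Panyushev's two reductions, Theorem~\ref{C inductive} and Theorem~\ref{C parabolic}, exactly as the remark preceding the statement anticipates.

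First I would dispose of the case $a=b$. By Theorem~\ref{C inductive}(i) with $\ul a=(a)$ and $\ul b=(b)=(a)$,
\[\ind\mf p_n^\C((a)\dd(b))=a+\ind\mf p_{n-a}^\C(\emptyset\dd\emptyset).\]
Here $\mf p_{n-a}^\C(\emptyset\dd\emptyset)$ is all of $\mathfrak{sp}(2(n-a))$, hence reductive, so its index equals its rank $n-a$ (equivalently this is Theorem~\ref{C parabolic} applied to the empty string), and the index is $a+(n-a)=n$.

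For $a>b$, after replacing $\mf p_n^\C((a)\dd(b))$ by the isomorphic $\mf p_n^\C((b)\dd(a))$, I would iterate Theorem~\ref{C inductive}(ii), tracking its effect on a seaweed $\mf p_m^\C((s)\dd(\ell))$ with $s<\ell$. When $\ell/2<s<\ell$, the ``$a_1>b_1/2$'' branch produces $\mf p_{m-\ell+s}^\C((2s-\ell)\dd(s))$; the result again has two single parts, the larger part (now $s$) has strictly decreased, and both the difference $\ell-s$ of the parts and the quantity $m-\ell$ are preserved. Thus this branch is a Euclidean-type descent that strictly shrinks the larger part and therefore terminates; since the difference of the parts stays equal to $a-b\neq0$, the two parts are never equal, so the descent cannot stop in case~(i) and must stop in the ``$a_1\le b_1/2$'' branch. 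Applied to $\mf p_m^\C((s)\dd(\ell))$ with $s\le\ell/2$, that branch yields the parabolic $\mf p_{m-s}^\C(\emptyset\dd(\ell-2s,\,s))\cong\mf p_{m-s}^\C((\ell-2s,\,s)\dd\emptyset)$ --- a vanishing leading part, which occurs precisely when $\ell=2s$, being simply deleted. Theorem~\ref{C parabolic} then gives
\[\ind\mf p_{m-s}^\C((\ell-2s,\,s)\dd\emptyset)=(m-s)-(\ell-s)+\lf\tfrac{\ell-2s}{2}\rf+\lf\tfrac{s}{2}\rf.\]

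Finally I would reassemble. Writing $\ell,s,m$ for the larger part, smaller part, and ambient rank at the terminal step, the two preserved quantities give $\ell-s=a-b$ and $m-\ell=n-a$, so the expression above equals $(n-a)+\lf\ell/2\rf-\lc s/2\rc$ (using $\lf(\ell-2s)/2\rf=\lf\ell/2\rf-s$). It would then remain to check that $(n-a)+\lf\ell/2\rf-\lc s/2\rc$ coincides with the two-case expression in the statement: the parities of $\ell$ and $s$ differ from those of $a$ and $b$ only by a multiple of $a-b$, so one would collapse $\lf\ell/2\rf-\lc s/2\rc$ according to those parities and the parity of $n$, treating the boundary instances ($n=a$, $\ell=2s$, or a part that has vanished) individually. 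I expect this last step --- carrying the floor and ceiling functions and the parity of the terminal part correctly through the iterated reduction --- to be the only real obstacle; the earlier steps follow routinely from Panyushev's reductions.
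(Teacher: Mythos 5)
Your descent itself is sound, and it is a genuinely different organization from the paper's: the paper first normalizes via Corollary~\ref{sums<n} to the case where the larger part equals the rank, and then proves the reduced formula~\eqref{2 parts eq} by a single application of Theorem~\ref{C inductive}(ii) when $b\le n/2$ and by induction on $n$ (four parity cases) when $b>n/2$; you instead run the full Euclidean-type iteration of Theorem~\ref{C inductive}(ii), correctly observing that each step of the branch $s>\ell/2$ preserves $\ell-s$ and $m-\ell$, strictly decreases the larger part, can never fall into case (i) since the difference stays $a-b\neq 0$, and terminates in the parabolic $\mf{p}_{m-s}^\C((\ell-2s,s)\dd\emptyset)$, whose index Theorem~\ref{C parabolic} gives as $(m-\ell)+\lf(\ell-2s)/2\rf+\lf s/2\rf=(n-a)+\lf \ell/2\rf-\lc s/2\rc$. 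Up to this point everything checks.

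The genuine gap is that you stop precisely where the content of the statement lives, and the way you propose to finish would not go through as described. At termination one has $\ell=s+(a-b)$ with $1\le s\le a-b$ and $s\equiv b \pmod{a-b}$, so $\lf\ell/2\rf-\lc s/2\rc$ is determined by the parities of $a$ and $b$ alone: carrying out the computation gives $\lf(a-b)/2\rf$ unless $a$ and $b$ are both odd, in which case it gives $\lf(a-b-1)/2\rf$. The parity of $n$ never enters, because $n$ appears only through the invariant $m-\ell=n-a$; so your plan to ``collapse $\lf\ell/2\rf-\lc s/2\rc$ according to \dots the parity of $n$'' is aimed at the wrong quantity. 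What your argument actually produces is $n-a$ plus the right-hand side of~\eqref{2 parts eq} evaluated with $a$ in place of $n$, which agrees with the displayed two-case expression in the normalized situation $a=n$ --- exactly the situation the paper's proof reduces to via Corollary~\ref{sums<n} before doing any parity bookkeeping. To complete your proof you must (i) actually perform the terminal parity analysis above rather than defer it, and (ii) either first apply Corollary~\ref{sums<n} to reduce to $a=n$, as the paper does, or state and verify the unreduced formula in terms of the parity of $a$; as written, the final identification keyed to the parity of $n$ is the step that would fail.
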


\begin{proof}

If $n>a$ then $\ind \mf{p}_n^\C((a) \dd (b))=n-a+\ind \mf{p}_{n-a}^\C((a) \dd (b))$, so it suffices to show that
\begin{equation}\label{2 parts eq}
\ind \mf{p}_n^\C((n) \dd (b))=
\begin{cases}
\lf\frac{n-b}{2}\rf & \text{ if }n\text{ is even}\\
\lf\frac{n-b-1}{2}\rf & \text{ if }n\text{ is odd}.
\end{cases}
\end{equation}

Suppose $b\leq n/2$. By Theorem \ref{C inductive} and Theorem \ref{C parabolic} we have
\[\ind \mf{p}_n^\C((n) \dd (b))=\ind \mf{p}_n^\C((b) \dd (n))=\ind \mf{p}_{n-b}^\C((\emptyset) \dd (n-2b,b))
=\lf\frac{n-2b}{2}\rf+\lf\frac{b}{2}\rf.\]
If $n$ is even then $n-2b$ is even and
\[\lf\frac{n-2b}{2}\rf+\lf\frac{b}{2}\rf=\frac{n-2b}{2}+\lf\frac{b}{2}\rf=\lf\frac{n-b}{2}\rf.\]
If $n$ is odd then $n-2b$ is odd and
\[\lf\frac{n-2b}{2}\rf+\lf\frac{b}{2}\rf=\frac{n-2b-1}{2}+\lf\frac{b}{2}\rf=\lf\frac{n-b-1}{2}\rf.\]

Suppose $b>n/2$. We prove \eqref{2 parts eq} by induction on $n$. The base case is trivial. For the inductive step,
use Theorem \ref{C inductive}:
\[\ind \mf{p}_n^\C((n) \dd (b))=\ind \mf{p}_n^\C((b) \dd (n))=\ind \mf{p}_{b}^\C((2b-n) \dd (b)).\]
Using the inductive hypothesis, it is easy to show that \eqref{2 parts eq} holds by considering all four cases
for the parity of $n$ and $b$.

\end{proof}

The next (and last) case we consider is when $\ul{a}$ and $\ul{b}$ have a total of three parts.

In this case we
rely on the graph theoretic characterization of the index, given in Theorem \ref{symplectic index}.

\begin{thm}\label{3 parts thm1}
Let $a+b=n$. If $c=n-1$ or $c=n-2$ then

\begin{eqnarray}\label{Formula}
\ind \mf{p}_n^\C((a,b) \dd (c))=\gcd(a+b,b+c)-1.
\end{eqnarray}

\end{thm}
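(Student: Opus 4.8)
The plan is to reduce the computation to Panyushev's inductive formulas (Theorem~\ref{C inductive} and Theorem~\ref{C parabolic}) together with the graph-theoretic count of Theorem~\ref{symplectic index}, and to trace through exactly what happens to the tail. Since $a+b=n$ and $c\in\{n-1,n-2\}$, the string $\ul{a}=(a,b)$ sums to $n$ and $\ul{b}=(c)$ sums to $n-1$ or $n-2$, so by Corollary~\ref{sums<n} we may work with $M_n^\C((a,b)\dd(c))$ directly, and the tail $T$ is $T_n(\ul{b})\setminus T_n(\ul{a}) = \{c+1,\dots,n\}$, which has exactly $1$ or $2$ vertices. First I would dispose of the case $c=n$ ruled out by hypothesis and treat $c=n-1$ and $c=n-2$ in parallel, noting the tail has odd size in the first case and even size in the second; this parity is precisely what will make the "$0$ or $2$ tail vertices" bookkeeping of Theorem~\ref{symplectic index} come out right.

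Next I would apply Panyushev's inductive step (Theorem~\ref{C inductive}(ii)) to peel the single bottom block $(c)$ against the top blocks $(a,b)$. After at most one or two reductions one lands on a parabolic seaweed $\mf{p}_{n'}^\C(\ul{a'}\dd\emptyset)$ (or on a two-block-versus-one-block seaweed to which the preceding corollary applies), whose index is given in closed form by Theorem~\ref{C parabolic} as $n'-\sum a_i' + \sum\lfloor a_i'/2\rfloor$. The arithmetic to be checked is that this expression equals $\gcd(a+b,b+c)-1 = \gcd(n,b+c)-1$. Here I would use that $b+c$ is $b+n-1$ or $b+n-2$, so $\gcd(n,b+c)=\gcd(n,b-1)$ or $\gcd(n,b-2)$; the Euclidean-algorithm behavior of $\gcd(n,\cdot)$ mirrors exactly the subtractive steps ($b_1\mapsto b_1-2a_1$, etc.) performed by Theorem~\ref{C inductive}, so the two recursions can be run in lockstep. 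An induction on $n$ (or on the number of reduction steps) then closes the argument, with the base case being a small parabolic seaweed handled by Theorem~\ref{C parabolic}.

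Alternatively—and this is the route I would actually write up, since it is the one the theorem's placement suggests—I would argue entirely on the meander. The top edges from $(a,b)$ and the bottom edges from $(c)$ produce a graph whose connected components, before accounting for the tail, are exactly those of the type~A meander $M^\A((a,b)\dd(c,n-c))$ restricted appropriately; by Theorem~\ref{A 4 parts} that type~A meander has index $\gcd(a+b,b+c)-1$, i.e.\ its number of components-plus-cycles is $\gcd(a+b,b+c)$. The task is then to show that converting to the symplectic count—discarding components with exactly one tail vertex—changes this total by exactly the right amount. Because the tail has only $1$ or $2$ vertices, at most one component is affected, and I would show directly (from the edge structure near the right end, where the last top block $(b)$ folds the vertices $n-b+1,\dots,n$ and the bottom block $(c)$ folds $1,\dots,c$) that the tail vertices lie in a single path component, which contributes $1$ in type~A but $0$ (if $|T|=1$) or still $1$ (if $|T|=2$) symplectically; a short parity check using $c=n-1$ versus $c=n-2$ confirms which case occurs and that the net count lands on $\gcd(a+b,b+c)-1$.

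The main obstacle will be the case analysis identifying which component the tail vertices sit in and whether they sit together: one must rule out the possibility that the two tail vertices (when $c=n-2$) land in different components or in a cycle, and rule out that the lone tail vertex (when $c=n-1$) is isolated in a way that would perturb the count differently. This is really a statement about the combinatorics of the folding permutations $t$ and $b$ near vertex $n$, and pinning it down—either by a clean invariant or by exhausting the parities of $a$, $b$, $c$—is where the genuine work lies; everything else is the routine $\gcd$/floor-function arithmetic that I would not grind through here.
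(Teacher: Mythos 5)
Your second (meander-based) route is indeed the paper's approach: compare $M_n^\C((a,b)\dd(c))$ with the type~A meander $M^\A((a,b)\dd(c,n-c))$, invoke Theorem~\ref{A 4 parts} to get that the latter has components-plus-cycles equal to $\gcd(a+b,b+c)$, and then track how the tail changes the count via Theorem~\ref{symplectic index}. Your treatment of $c=n-1$ is fine (the two graphs are isomorphic, the lone tail vertex has no bottom edge so it sits in a path or is isolated, and that component's contribution drops from $1$ to $0$). But your handling of $c=n-2$ contains a genuine error. You propose to ``rule out the possibility that the two tail vertices land in different components,'' and your bookkeeping asserts that the tail vertices lie in a single path component ``which contributes $1$ in type~A but \ldots still $1$ (if $|T|=2$) symplectically.'' First, the structural claim is false: the two tail vertices can perfectly well lie in different components of $M_n^\C((a,b)\dd(c))$. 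For instance, with $n=4$, $(a,b)=(1,3)$, $c=2$, the symplectic meander has top edge $\{2,4\}$ and bottom edge $\{1,2\}$, so vertex $4$ lies in the path $3$--$\,$free\ldots more precisely the components are $\{1,2,4\}$ and the isolated vertex $\{3\}$, and the tail vertices $3$ and $4$ are separated; the index is $0=\gcd(4,5)-1$, consistent with the theorem but not with your claimed structure. Second, even if your structural claim held, your arithmetic would not: a component counted once in $M^\A((a,b)\dd(c,2))$ and still counted once symplectically gives no net drop, so your accounting for $|T|=2$ would yield $\gcd(a+b,b+c)$ rather than $\gcd(a+b,b+c)-1$. (In fact the configuration you describe is impossible: if the type~A component through $n-1,n$ is a path, deleting the bottom edge $\{n-1,n\}$ necessarily disconnects them.)

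What the $c=n-2$ case actually requires is a two-branch analysis of the component of $M^\A((a,b)\dd(c,2))$ containing the extra bottom edge $\{n-1,n\}$, with no case ruled out: if that component is a path (type~A contribution $1$), deleting the edge splits it into two components, each containing exactly one tail vertex, so the symplectic contribution is $0$; if it is a cycle (type~A contribution $2$), deleting the edge leaves a single path containing both tail vertices, symplectic contribution $1$. In either branch the total drops by exactly $1$, giving $\ind \mf{p}_n^\C((a,b)\dd(c))=\gcd(a+b,b+c)-1$. Your first proposed route (running Panyushev's reduction in lockstep with the Euclidean algorithm) could in principle be made to work, but as written it is only a sketch with the key arithmetic deferred, so it does not close the gap either.
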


\begin{proof}

Suppose $c=n-1$. The underlying graphs of $M_n^\C((a,b) \dd (c))$ and $M^\A((a,b) \dd (c,1))$ are
isomorphic. The only difference is that for the symplectic meander, we call the single vertex labeled $n$ the tail.
By Theorem  \ref{A 4 parts}, the number of cycles
plus the number of connected components of this graph is $\gcd(a+b,b+c)$. One connected component
of $M_n^\C((a,b) \dd (c))$ contains 1 vertex from the tail, and the remaining
connected components contain 0 vertices from the tail. By Theorem \ref{symplectic index}, equation (\ref{Formula}) follows.

Suppose that $c=n-2$. The vertices labeled $n-1$ and $n$ are the tail $T$ of $M_n^\C((a,b) \dd (c))$.
By Theorem \ref{A 4 parts}, the number of cycles
plus the number of connected components of $M^\A((a,b) \dd (c,2))$ is $\gcd(a+b,b+c)$.
Suppose further that both vertices of $T$ belong to a path in $M^\A((a,b) \dd (c,2))$. By removing the bottom edge
from $n-1$ to $n$ in $M^\A((a,b) \dd (c,2))$, we see that $n-1$ and $n$ belong to separate connected components
in $M_n^\C((a,b) \dd (c))$, and each component has only 1 vertex from the tail. Equation \ref{Formula} follows.
On the other hand, suppose both vertices of $T$ belong to a cycle in $M^\A((a,b) \dd (c,2))$. By removing the bottom edge
from $n-1$ to $n$ in $M^\A((a,b) \dd (c,2))$, we see that $n-1$ and $n$ belong to the same connected component
in $M_n^\C((a,b) \dd (c))$, and that component is a path. Once again, equation \ref{Formula} follows.
\end{proof}

\begin{thm}\label{3 parts thm2}
If $a+b=n$, then $\ind \mf{p}_n^\C((a,b) \dd (c))=0$ if and only if one of the following conditions hold:
\begin{enumerate}[(i)]
\item $c=n-1$ and $\gcd(a+b,b+c)=1,$
\item $c=n-2$ and $\gcd(a+b,b+c)=1,$
\item $c=n-3$, the integers $a,b,$ and $c$ are all odd, and $\gcd(a+b,b+c)=2$.
\end{enumerate}
\end{thm}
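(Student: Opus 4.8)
The plan is to organize the argument around the size of the tail $T = T_n((a,b)\dd(c))$, which by the definition in Section 4.2 is exactly the set of vertices labeled $n-c+1,\dots,n$ (since $a+b=n\ge c$), hence $|T| = n-c$. By Corollary \ref{c necessary}, if $\ind\mf{p}_n^\C((a,b)\dd(c))=0$ then one string (here necessarily $\ul{b}=(c)$) sums to $n-r$ with $r$ odd integers among $a,b,c$; in particular $r = n-c = |T|$ and $r$ must be positive, so $c<n$, and moreover $a,b,c$ must contain exactly $n-c$ odd integers. Since there are at most three parts, $r=n-c\in\{1,2,3\}$, which immediately restricts us to $c\in\{n-1,n-2,n-3\}$ and dictates the parity conditions appearing in (i)--(iii). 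So the necessity of ``$c\ge n-3$'' and of the stated parity constraints is essentially a bookkeeping consequence of Corollary \ref{c necessary}, and the real content is pinning down the gcd condition in each of the three cases.

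For $c=n-1$ and $c=n-2$, this is already done: Theorem \ref{3 parts thm1} gives $\ind\mf{p}_n^\C((a,b)\dd(c)) = \gcd(a+b,b+c)-1$, so the index vanishes precisely when $\gcd(a+b,b+c)=1$, which is exactly (i) and (ii). (One should note that the parity requirement from Corollary \ref{c necessary} is automatically compatible: $\gcd(a+b,b+c)=1$ with $a+b=n$ forces the appropriate parity of $b+c$, and the count of odd parts works out.) The remaining and genuinely new case is $c=n-3$, where $|T|=3$. Here I would again pass to the associated type A meander $M^\A((a,b)\dd(c,3))$ as in the proof of Theorem \ref{3 parts thm1}: by Theorem \ref{A 4 parts} the number of cycles plus connected components of $M^\A((a,b)\dd(c,3))$ equals $\gcd(a+b,b+c)$. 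The tail consists of the three rightmost vertices $n-2,n-1,n$, and the bottom block $(\dots,3)$ contributes a bottom edge joining $n-2$ and $n$ (the center vertex $n-1$ of that block gets no bottom edge since $3$ is odd). Removing that bottom edge converts $M^\A((a,b)\dd(c,3))$ into $M_n^\C((a,b)\dd(c))$, and I would analyze how the three tail vertices distribute among the components of the resulting symplectic meander, computing the index contribution of each component via Theorem \ref{symplectic index}.

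The case analysis for $c=n-3$ is where the work concentrates, and it is the step I expect to be the main obstacle. With three tail vertices one must track several configurations of the type A meander $M^\A((a,b)\dd(c,3))$: whether $n-2$ and $n$ lie on a common path, on a common cycle, or on distinct components, and into which component the lone vertex $n-1$ (incident only to a top edge) falls. Deleting the bottom edge $\{n-2,n\}$ either splits one path into two paths, or opens a cycle into a path, or merges two paths into one; in each outcome one computes the resulting tail-vertex counts $\{0,1,2,3\}$ among components and applies the rule ``a component with $0$ or $2$ tail vertices contributes as in type A, a component with an odd number of tail vertices that is forced to be a path contributes nothing.'' The requirement that $a,b,c$ all be odd (equivalently that all three top/bottom center vertices are edge-free, feeding vertices into the tail-adjacent components) should be exactly what forces the structure so that $\ind = 0$ occurs iff $\gcd(a+b,b+c) = 2$ rather than $1$; I would verify that when $\gcd(a+b,b+c)=1$ the parity count of Corollary \ref{c necessary} already fails (three tail vertices but not three odd parts), so no index-zero seaweed arises, and that when $\gcd(a+b,b+c)=2$ with all parts odd the two type-A components arrange so that each ends up a path carrying an odd number of tail vertices, giving total index $0$, while $\gcd(a+b,b+c)\ge 3$ leaves extra components that contribute. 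Checking that these parity and divisibility conditions interlock correctly across all the sub-configurations — and handling the boundary possibilities where a tail vertex is itself an isolated vertex — is the delicate part of the proof.
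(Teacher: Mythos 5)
Your overall framing (reduce to tail size via Corollary \ref{c necessary}, dispose of $c=n-1$ and $c=n-2$ by Theorem \ref{3 parts thm1}, and handle $c=n-3$ by comparing $M_n^\C((a,b)\dd(c))$ with the type A meander $M^\A((a,b)\dd(c,3))$ through the bottom edge $e$ joining $n-2$ and $n$) is exactly the paper's strategy, and the necessity direction of (iii) goes through as you sketch it: index zero forces three paths with one tail vertex each, adding $e$ yields two paths, and Theorem \ref{A 4 parts} gives $\gcd(a+b,b+c)=2$. But the sufficiency of (iii) is left as a list of configurations ``to be checked,'' and that is precisely where the real content of the theorem lies, so as written there is a genuine gap. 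Knowing that $a,b,c,3$ are odd and $\gcd(a+b,b+c)=2$ tells you, via Theorem \ref{A 4 parts}, that $M^\A((a,b)\dd(c,3))$ is two cycle-free paths, and deleting $e$ then shows only that $n-2$ and $n$ land in distinct components of $M_n^\C((a,b)\dd(c))$. It does not locate the middle tail vertex $n-1$: if $n-1$ were to share its path with $n-2$ or with $n$, then one component would contain two tail vertices and another would contain none, and Theorem \ref{symplectic index} would give index $2$, not $0$. So the statement you want --- each of the three paths contains exactly one vertex of $T=\{n-2,n-1,n\}$ --- requires a separate argument, and your proposal contains no mechanism for producing it; also note that your contribution rule ``a component with an odd number of tail vertices contributes nothing'' is not by itself enough, since a path swallowing all three tail vertices would leave two tail-free paths that each contribute.

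The paper closes this gap with a specific combinatorial device you would need to supply: starting from the consecutive pair $(n-1,n)$ (and then from $(n-2,n-1)$), follow the unique pairs of top/bottom edges leaving the current pair of consecutive vertices. Oddness of $a$, $b$, and $c$ rules out the two paths ever being joined by a single edge between consecutive vertices, and the only other way the two walks could merge or misbehave is the wrap-around configuration in which the top edges at $a$ and $a+1$ terminate at $1$ and $n$; this is excluded by a left/right parity bookkeeping (which side of the pair each walk occupies flips at every step), in the second case using the already-established fact that $n-1$ and $n$ lie in distinct components. Without this tracking argument (or a substitute for it), the claim that all-odd parts plus $\gcd(a+b,b+c)=2$ forces each path to carry exactly one tail vertex is unsupported, and the ``if'' direction of (iii) remains unproven.
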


\begin{example} Consider the seaweed $\mf{p}_{16}^\C((7,9) \dd (13))$. By Theorem \ref{3 parts thm2},
it is Frobenius. Below is its meander, with components highlighted.  Also note that one would need to apply seven signature moves (FRPFRPP) to see that its index is the same as the parabolic seaweed $\mf{p}_{3}^\C((1,1,1)\dd \emptyset$)




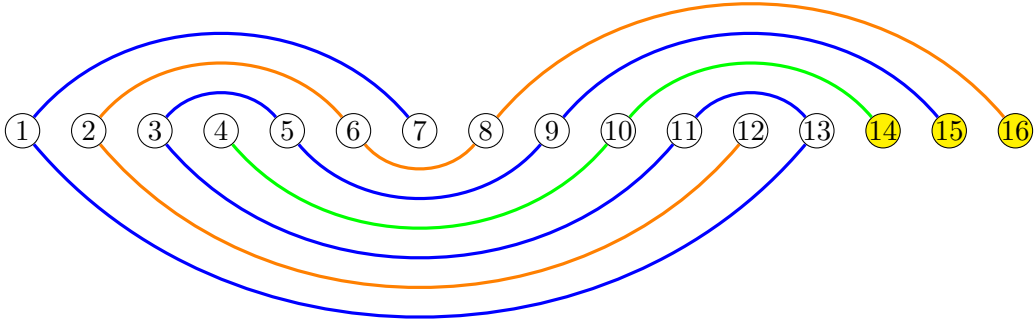
\begin{figure}[H]
\[\begin{tikzpicture}
\vertex (1) at (.88,0) {1};
\vertex (2) at (1.76,0) {2};
\vertex (3) at (2.64,0) {3};
\vertex (4) at (3.52,0) {4};
\vertex (5) at (4.4,0) {5};
\vertex (6) at (5.28,0) {6};
\vertex (7) at (6.16,0) {7};
\vertex (8) at (7.04,0) {8};
\vertex (9) at (7.92,0) {9};
\vertex (10) at (8.8,0) {10};
\vertex (11) at (9.68,0) {11};
\vertex (12) at (10.56,0) {12};
\vertex (13) at (11.44,0) {13};
\vertex[fill=yellow] (14) at (12.32,0) {14};
\vertex[fill=yellow] (15) at (13.2,0) {15};
\vertex[fill=yellow] (16) at (14.08,0) {16};

\path
(1) edge[bend left=50,color=blue,line width=1.2pt] (7)
(2) edge[bend left=50,color=orange,line width=1.2pt] (6)
(3) edge[bend left=50,color=blue,line width=1.2pt] (5)
(8) edge[bend left=50,color=orange,line width=1.2pt] (16)
(9) edge[bend left=50,color=blue,line width=1.2pt] (15)
(10) edge[bend left=50,color=green,line width=1.2pt] (14)
(11) edge[bend left=50,color=blue,line width=1.2pt] (13)

(1) edge[bend right=50,color=blue,line width=1.2pt] (13)
(2) edge[bend right=50,color=orange,line width=1.2pt] (12)
(3) edge[bend right=50,color=blue,line width=1.2pt] (11)
(4) edge[bend right=50,color=green,line width=1.2pt] (10)
(5) edge[bend right=50,color=blue,line width=1.2pt] (9)
(6) edge[bend right=50,color=orange,line width=1.2pt] (8)
;\end{tikzpicture}\]
\caption{$M_{16}^\C((7,9)\dd (13))$ with components highlighted}
\end{figure}
\end{example}

\begin{proof}

Suppose $c\leq n-4$. Then by Corollary \ref{c necessary} $\ind \mf{p}_n^\C((a,b) \dd (c))\neq 0$
since we have at most three odd integers among $a,b,$ and $c$. The cases $c=n-1$ and $c=n-2$ follow
immediately from Theorem \ref{3 parts thm1}.

The remaining case $c=n-3$ requires more work. First suppose that $\ind \mf{p}_n^\C((a,b) \dd (c))=0$.
By Corollary \ref{c necessary}, $a,b,$ and $c$ must all be odd integers. By Theorem \ref{symplectic index},
the meander $M_n^\C((a,b) \dd (c))$ consists of three paths, and each path contains exactly one vertex
from the tail $T=\{n-2,n-1,n\}$. If we let $e$ denote an edge from $n-2$ to $n$, then the underlying graph
of $M_n^\C((a,b) \dd (c))+e$ is isomorphic to $M_n^\A((a,b) \dd (c,3))$. Since $M_n^\A((a,b) \dd (c,3))$
consists of two paths, by Theorem \ref{A 4 parts} we have $\gcd(a+b,b+c)=2$.

Finally, suppose that $c=n-3$, the integers $a,b,c$ are all odd, and $\gcd(a+b,b+c)=2$. Again we use the fact
that the underlying graph $M_n^\C((a,b) \dd (c))+e$ is isomorphic to $M_n^\A((a,b) \dd (c,3))$. Note that $a,b,c,3$ are
all odd and $\gcd(a+b,b+c)=2$. Using Theorem \ref{A 4 parts}, we conclude that $M_n^\A((a,b) \dd (c,3))$ has no
cycles, and consists of two paths. Therefore, $M_n^\C((a,b) \dd (c))$ consists of three paths, and in particular
the vertices $n-2$ and $n$ belong to distinct components.

Next we show that the vertices $n-1$ and $n$
belong to distinct components in $M_n^\C((a,b) \dd (c))$. If we follow the pair of edges starting from
$n-1$ and $n$, they will connect with another pair of consecutive vertices (i.e. vertices whose labels are consecutive
integers). We successively follow pairs of edges, until either
\begin{enumerate}
\item one (or both) of the edges connects to a vertex which is the end of a path,
\item there is a single edge connecting the consecutive vertices,
\item the pair of edges connects to vertices that are not consecutive.
\end{enumerate}
If (1) occurs then we are done. Because $a,b,c$ are all odd, (2) cannot occur. The only way (3)
can occur is if the pair edges are top edges originating from vertices $a$ and $a+1$, and terminate at 1 and $n$ respectively.
But the path containing the vertex $n-1$ must be the path containing the vertex $a$. This is because the paths
begin with the component containing $n-1$ on the left, and the component containing $n$ on the right, and this switches
every time we follow one pair of paths. So after following a pair of bottom edges to reach vertices $a$ and $a+1$, the
component containing $n-1$ is on the right, and the component containing $n$ is on the left. But then the path originating
at $n$ also terminates at $n$, so (3) is impossible as well.

Finally, we use a similar technique to show that the vertices $n-2$ and $n-1$ belong to distinct components of
$M_n^\C((a,b) \dd (c))$. Follow pairs of edges starting from $n-2$ and $n-1$. As above (2) is impossible.
If (3) occurs, then the pair of edges are top edges originating from vertices $a$ and $a+1$, and terminate at 1 and $n$ respectively.
But the path containing $n-1$ is on the left, so it continues from vertex $a+1$ to $n$. However, we have already shown that
$n-1$ and $n$ must belong to distinct components. Therefore (3) is impossible, and (1) must occur as desired.

\end{proof}

\begin{thm}\label{3 parts thm3}  Let $a$, $b$, and $n$ be positive integers.

\begin{enumerate}[(i)]

\item If $a+b=n-1$ then $\ind \mf{p}_n^\C((n) \dd (a,b))=\gcd(a+b,b+1)-1$.

\item If $a+b=n-2$ then $\ind \mf{p}_n^\C((n) \dd (a,b))=\gcd(a+b,b+2)-1$.

\end{enumerate}

\end{thm}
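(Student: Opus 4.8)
The plan is to reduce both parts to the type A formula of Theorem~\ref{A 4 parts}, by identifying the symplectic meander $M_n^\C((n) \dd (a,b))$ with (a one‑edge modification of) a type A meander and then bookkeeping the tail contributions via the component formula Theorem~\ref{symplectic index}. This runs parallel to the proof of Theorem~\ref{3 parts thm1}, only with the roles of the ``two‑part'' and ``one‑part'' sides exchanged.

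For part (i): here $\sum a_i = n$ and $\sum b_i = n-1$, so the tail is the single vertex $T=\{n\}$. I first note that the underlying graph of $M_n^\C((n)\dd(a,b))$ is exactly the underlying graph of $M^\A((n)\dd(a,b,1))$: appending a final bottom part equal to $1$ adds only the lone vertex $n$, which (its part being odd) carries no bottom edge, matching the symplectic meander's treatment of $n$. By the flip move and Theorem~\ref{DKformula}, the number of connected components plus cycles of $M^\A((n)\dd(a,b,1))\cong M^\A((a,b,1)\dd(n))$ equals $\ind\mf{p}^\A((a,b,1)\dd(n))+1=\gcd(a+b,b+1)$ by Theorem~\ref{A 4 parts}. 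Now in $M_n^\C((n)\dd(a,b))$ the vertex $n$ has degree one (its only edge is the top edge to $1$, since the single top block $(n)$ pairs $j$ with $n+1-j$), so it lies on a path; this path is the unique component meeting $T$ and it contains exactly one tail vertex, hence contributes nothing, while every other component avoids $T$ and contributes exactly as in the type A count. Therefore $\ind\mf{p}_n^\C((n)\dd(a,b))=\gcd(a+b,b+1)-1$.

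For part (ii): now $\sum b_i=n-2$ and $T=\{n-1,n\}$. This time $M_n^\C((n)\dd(a,b))$ is obtained from $M^\A((n)\dd(a,b,2))$ by deleting the single bottom edge joining $n-1$ and $n$ (the whole third bottom block). As before, the number of components plus cycles of $M^\A((n)\dd(a,b,2))$ is $\gcd(a+b,b+2)$ by Theorem~\ref{A 4 parts}. Since the edge $\{n-1,n\}$ is present there, $n-1$ and $n$ lie in one common component, which is either a cycle or a path. If it is a cycle, deleting the edge leaves the component count unchanged and lowers the cycle count by one; the resulting path carries both tail vertices, and all other components avoid $T$, so Theorem~\ref{symplectic index} gives $\ind=(\text{cycles}-1)+\text{components}=\gcd(a+b,b+2)-1$. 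If it is a path, deleting this bridge splits it into two paths, each with exactly one tail vertex and hence contributing nothing, while the total component count increases by one and all other components still avoid $T$, so $\ind=\text{cycles}+(\text{components}+1)-2=\gcd(a+b,b+2)-1$. Either way the formula holds.

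I expect the only point genuinely needing care is the initial identification of underlying graphs: checking that appending a final bottom part of size $1$ (resp.\ size $2$) to the type A composition reproduces the symplectic meander together with its tail, and that the top block $(n)$ forces each tail vertex to have degree one and therefore to lie on a path rather than a cycle. Both follow straight from the explicit edge rules, so I anticipate no substantive obstacle; the remainder is just the short case analysis on deleting one edge from a graph whose components‑plus‑cycles total is already pinned down by Theorem~\ref{A 4 parts}. (An alternative, slightly more computational route would apply Panyushev's inductive Theorem~\ref{C inductive} after a flip, but the meander argument above is cleaner and more in keeping with Theorem~\ref{3 parts thm1}.)
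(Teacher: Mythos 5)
Your proof is correct and is essentially the argument the paper intends: the paper omits the proof of this theorem, stating it is nearly identical to that of Theorem~\ref{3 parts thm1}, and your identification of $M_n^\C((n)\dd(a,b))$ with $M^\A((n)\dd(a,b,1))$ (resp.\ with $M^\A((n)\dd(a,b,2))$ minus the bottom edge $\{n-1,n\}$), followed by the tail bookkeeping via Theorem~\ref{symplectic index} and the path/cycle case split, is exactly that adaptation.
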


\begin{proof}

The proof is nearly identical to that of Theorem \ref{3 parts thm1}, and is omitted.


\end{proof}

\begin{thm}\label{3 parts thm4}
The index of $\mf{p}_n^\C((n) \dd (a,b))$ is equal to zero
if and only if one of the following conditions hold
\begin{enumerate}[(i)]
\item $a+b=n-1$ and $\gcd(a+b,b+1)=1$

\item $a+b=n-2$ and $\gcd(a+b,b+2)=1$

\item $a+b=n-3$, the integers $n,a,$ and $b$ are all odd, and $\gcd(a+b,b+3)=2$.

\end{enumerate}
\end{thm}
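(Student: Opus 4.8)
The plan is to mirror the structure of the proof of Theorem \ref{3 parts thm2}, since Theorem \ref{3 parts thm4} is the flip-image of that result under the symmetry $\mf{p}_n^\C(\ul a \dd \ul b)\cong\mf{p}_n^\C(\ul b \dd \ul a)$ together with Corollary \ref{sums<n}. First I would dispose of the easy cases: if $a+b\le n-4$, then by Corollary \ref{c necessary} we would need at least four odd integers among $n,a,b$, which is impossible since there are only three; hence the index is nonzero. The cases $a+b=n-1$ and $a+b=n-2$ then follow immediately from Theorem \ref{3 parts thm3}, exactly as the corresponding cases of Theorem \ref{3 parts thm2} follow from Theorem \ref{3 parts thm1}.

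The remaining case $a+b=n-3$ is the substantive one. Here the tail is $T=\{n-2,n-1,n\}$. For the forward direction, assume $\ind\mf{p}_n^\C((n)\dd(a,b))=0$; by Corollary \ref{c necessary} the integers $n,a,b$ are all odd, and by Theorem \ref{symplectic index} the meander $M_n^\C((n)\dd(a,b))$ is a disjoint union of three paths, each carrying exactly one tail vertex. Adding an edge $e$ joining $n-2$ to $n$ (both incident to no top edge, since $n$ is odd so the center of the single top block is uncovered, and $n-2$ lies past the support of the bottom composition) produces a graph isomorphic to $M_n^\A((n)\dd(a,b,3))$; since that graph is two paths, Theorem \ref{A 4 parts} gives $\gcd(a+b,b+3)=2$. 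For the converse, assuming $a+b=n-3$, all of $n,a,b$ odd, and $\gcd(a+b,b+3)=2$: since $n,a,b,3$ are all odd and the gcd is $2$, Theorem \ref{A 4 parts} forces $M_n^\A((n)\dd(a,b,3))$ to have no cycles and to consist of two paths, so deleting $e$ shows $M_n^\C((n)\dd(a,b))$ has no cycles and (so far) $n-2,n$ lie in distinct components.

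The delicate part is then showing the three tail vertices $n-2,n-1,n$ all land in distinct path-components, which is precisely the pair-chasing argument in the proof of Theorem \ref{3 parts thm2}, now adapted to the single top block $(n)$ and bottom composition $(a,b)$. One follows the pair of edges emanating from a pair of consecutive tail vertices, tracking which side (left/right) of the pair each component lies on; the parity hypotheses rule out the "single connecting edge" case, and the only way the consecutive structure can break is when a pair of edges from the top block $(n)$ reaches vertices $1$ and $n$ — and the left/right bookkeeping shows this would force a path from $n$ back to $n$, a contradiction. I expect this side-tracking argument to be the main obstacle: the top composition here is a single block rather than a two-block composition, so the geometry of where the "non-consecutive escape" can occur is slightly different from Theorem \ref{3 parts thm2} and must be re-verified, though the underlying mechanism (alternation of the left/right labels each time one follows a matched pair of arcs, combined with oddness of all parts) is identical. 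Once $n-2,n-1$ and $n-1,n$ are shown to be separated, the three-path structure is complete and Theorem \ref{symplectic index} gives index zero.
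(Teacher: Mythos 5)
Your treatment of the easy cases and of necessity in case (iii) matches the paper: the $a+b\le n-4$ elimination via Corollary \ref{c necessary}, the appeal to Theorem \ref{3 parts thm3} for $a+b=n-1,n-2$, and the trick of adding a bottom edge $e$ between $n-2$ and $n$ to identify the graph with $M_n^\A((n)\dd(a,b,3))$ and invoke Theorem \ref{A 4 parts} are all exactly the paper's steps (one small slip: $n-2$ and $n$ are the vertices with no \emph{bottom} edge, which is why $e$ may be added as a bottom edge; your parenthetical says ``no top edge''). The genuine gap is in sufficiency for case (iii). You reduce it, correctly, to showing that $n-1$ lies in a different path-component from each of $n-2$ and $n$, but you do not prove this: you defer it (``must be re-verified''), and the one concrete thing you say about how the chase would go is wrong for this configuration. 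With the single top block $(n)$, a pair of top edges always carries a consecutive pair $(j,j+1)$ to the consecutive pair $(n-j,n+1-j)$; the ``non-consecutive escape'' can never come from the top reaching $1$ and $n$. The danger here is on the bottom: the pair $(a,a+1)$ straddling the boundary of the two bottom blocks is sent to $(1,a+b)$, and ruling that out needs a left/right bookkeeping argument re-done from scratch for this geometry, not the ``identical mechanism'' of Theorem \ref{3 parts thm2}. Since this is precisely the delicate point of the whole theorem, the proposal as written does not establish sufficiency.

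For comparison, the paper deliberately avoids redoing the chase. It first checks the single case $n=5$ directly, and for $n\ge 7$ contracts the three top edges incident to the tail $\{n-2,n-1,n\}$ (which attach to $1,2,3$), relabels the tail as $\{1,2,3\}$, and flips, obtaining $\ind \mf{p}_n^\C((n)\dd(a,b))=\ind \mf{p}_{n-3}^\C((b,a)\dd(n-6))$. A Bezout manipulation, $(k+2m)(a+b)-m(a+n-6)=k(a+b)+m(b+3)=2$ together with the evenness of $a+b$ and $a+n-6$, converts $\gcd(a+b,b+3)=2$ into $\gcd(a+b,a+n-6)=2$, and then Theorem \ref{3 parts thm2}(iii) applies to the reduced seaweed. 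If you want to salvage your route, you must actually carry out the adapted pair-chasing with the escape located at the bottom-block boundary as above; otherwise the cleaner fix is the paper's reduction to Theorem \ref{3 parts thm2}.
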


\begin{proof}

As was the case with Theorem \ref{3 parts thm2}, if $c\leq 4$ then by Corollary \ref{c necessary}
it follows that
$\ind \mf{p}_n^\C((n) \dd (a,b))\neq 0$
since we have at most three odd integers among $a,b,$ and $n$. And the cases $a+b=n-1$ and $a+b=n-2$ follow
immediately from Theorem \ref{3 parts thm3}.

Assume $a+b=n-3$. We prove necessity in a manner similar to the proof of Theorem \ref{3 parts thm2}.
Suppose that $\ind \mf{p}_n^\C((n) \dd (a,b))=0$.
By Corollary \ref{c necessary}, $a,b,$ and $n$ must all be odd integers. By Theorem \ref{symplectic index},
the meander $M_n^\C((n) \dd (a,b))$ consists of three paths, and each path contains exactly one vertex
from the tail $T=\{n-2,n-1,n\}$. If we let $e$ denote an edge from $n-2$ to $n$, then the underlying graph
of $M_n^\C((n) \dd (a+b))+e$ is isomorphic to $M_n^\A((n) \dd (a,b,3))$. Since $M_n^\A((n) \dd (a,b,3))$
consists of two paths, by Theorem \ref{A 4 parts} we have $\gcd(a+b,b+3)=2$.

Although we could prove sufficiency in a manner analogous to the proof of Theorem \ref{3 parts thm2},
we take a different approach here. Suppose that $a,b,$ and $n$ are odd integers,
and $\gcd(a+b,b+3)=2$.
If $n=5$, there is only one such seaweed to consider, and it is
easy to check that it satisfies condition (iii) of Theorem \ref{3 parts thm4}. Now assume $n\geq 7$.
We perform a series of edge contractions on the meander $M_n^\C((n) \dd (a,b))$,
keeping track of the vertices we identify as the tail $T$. Initially $T=\{n-2, n-1, n\}$, and we contract the
top edges connected to these vertices, and identify the tail as $T=\{1,2,3\}$. After reflecting horizontally and vertically,
we see that
\[\ind \mf{p}_n^\C((n) \dd (a,b))=\ind \mf{p}_{n-3}^\C((b,a) \dd (n-6)).\]
Since $\gcd(a+b,b+3)=2$,
there exists integers $k$ and $m$ such that
\[k(a+b)+m(b+3)=2.\]
Furthermore,
\[(k+2m)(a+b)-m(a+n-6)=(k+2m)(a+b)-m(2a+b-3)=k(a+b)+m(b+3)=2.\]
Since $a+b$ and $a+n-6$ are even, this implies that $\gcd(b+a,a+n-6)=2$. So by Theorem \ref{3 parts thm2}
we have
\[\ind \mf{p}_n^\C((n) \dd (a,b))=\ind \mf{p}_{n-3}^\C((b,a) \dd (n-6))=0.\]
\end{proof}

\noindent
\textit{Remark:}  Consider the symplectic meander
$M^C_n ((a,b,c)\dd (d))$ where $a+b+c=n$ and $d<n$.  The index computations for this meander are analogous to those for the meander $M^A_n ((a,b,c)\dd (n-d,d))$ which by Theorem \ref{5 parts} has no linear gcd formula for its index.  We therefore do not expect that there is a closed linear gcd formula for the index of a symplectic meander of this form.

\bigskip
\noindent
\textit{Acknowledgments:}
The authors are grateful to Murray Gerstenhaber, Tony Giaquinto,  and Jim Stasheff for a number of helpful discussions.


\begin{thebibliography}{abcd}
\bibitem{Coll1}
V. Coll, A. Giaquinto, and C. Magnant, Meanders and Frobenius seaweed Lie algebras, \textit{J. Gen. Lie Theory and Applications} Vol. 5, Article ID G110103, 7 pages, doi: 10.4303/jglta/G110103, 2011.

\bibitem{Collar}
V. Coll, C. Magnant, and H. Wang,
The signature of a meander, arXiv:1206.2705, July 3, 2012.

\bibitem{Coll2}
V. Coll, M. Hyatt, C. Magnant, and H. Wang,
Frobenius seaweed Lie algebras II, \textit{J. Gen. Lie Theory and Applications} 9:1 dx.doi.org/10.4172/1736-4337.1000227, 2015.

\bibitem{Kirillov}
V. Dergachev and A. Kirillov, Index of Lie algebras of seaweed type, \textit{J. Lie Theory} 10(2):331-343, 2000.

\bibitem{Elash}
A. Elashvili, On the index of parabolic subalgebras of
semisimple Lie algebras, preprint, 1990.

\bibitem{G1}
M. Gerstenhaber and A. Giaquinto, Boundary solutions of the classical Yang-Baxter equation, \textit{Letters Math. Physics} 40:337-353, 1997.

\bibitem{G2}
M. Gerstenhaber and A. Giaquinto, Graphs, Frobenius functionals, and the classical Yang-Baxter equation, arXiv:0808.2423v1, August 18, 2008.

\bibitem{Joseph}
A. Joseph, On semi-invariants and index for biparabolic (seaweed)  algebras. I, \textit{ J. of Algebra}, 305:487-515, 2006.

\bibitem{Kar}
A. Karnauhova and S. Liebscher, Connected components of meanders: I. Bi-rainbow meanders, arXiv:1504.03099v1, April 13, 2015.

\bibitem{Ooms}
A. Ooms, On Lie algebras having a primitive universal enveloping algebra, \textit{J. Algebra} 32:488-500, 1974.

\bibitem{Panyushev1}
D. Panyushev, Inductive formulas for the index of seaweed Lie algebras, \textit{Mosc. Math. J.} 1(2):221-241, 2001.

\bibitem{Panyushev2}
D. Panyushev and O. Yakimova, On seaweed subalgebras and meander graphs in Type C,
arXiv:1601.00305v1, January 3, 2016.

\end{thebibliography}
\end{document}